\documentclass[12pt]{amsart}

\usepackage[margin=1in]{geometry}
\usepackage{setspace}
\usepackage{enumitem}
\usepackage{tikz}

\numberwithin{equation}{section}

\newtheorem{lem}{Lemma}[subsection]
\newtheorem{lemm}{Lemma}[section]
\newtheorem{cor}{Corollary}[section]
\newtheorem{thm}{Theorem}[section]
\newtheorem*{OZ}{Lemma}
\theoremstyle{definition}\newtheorem{defn}{Definition}[section]

\DeclareMathOperator{\C}{C*}
\DeclareMathOperator{\M}{M}

\begin{document}

\title{Villadsen algebras are singly generated}

\author{Chun~Guang Li}
\address{School of Mathematics and Statistics\\
Northeast Normal University\\
Changchun 130024\\
P. R. China}
\email{licg864@nenu.edu.cn}
\thanks{The result in this paper was obtained during the visit of the first named author to the University of Wyoming in 2023--2024. The first named author thanks the Department of Mathematics and Statistics at the University of Wyoming for its hospitality and the AMS China Exchange Program Ky and Yu-Fen Fan Fund Travel Grants for its support.}

\author{Zhuang Niu}
\address{Department of Mathematics and Statistics\\
University of Wyoming\\
Laramie, WY 82071\\
USA}
\email{zniu@uwyo.edu}
\thanks{The research of the second named author is supported by a Simons Foundation grant (MPTSM-00002606).}

\author{Vincent~M. Ruzicka}
\address{Department of Mathematics and Statistics\\
University of Wyoming\\
Laramie, WY 82071\\
USA}
\email{vruzicka@uwyo.edu}

\subjclass[2020]{Primary: 46L05; Secondary: 46L85, 46L35}
\keywords{Generator problem, singly generated, Villadsen algebra}

\begin{abstract}
	We show that Villadsen algebras, which are not $\mathcal Z$-stable, are singly generated. More generally, we show that any simple unital AH algebra with diagonal maps is singly generated. 
\end{abstract}
\maketitle

\section{Introduction}
	The generator problem asks about the minimal number of generators for a given C*-algebra---see \cite{TW} for a survey of this problem. In particular, one wonders if a given C*-algebra is \emph{singly} generated, and it is an interesting open question whether or not every simple separable unital C*-algebra is singly generated. In \cite{TW}, it is shown that every simple separable unital \emph{$\mathcal Z$-stable} C*-algebra $B$ is singly generated (i.e., $B \otimes \mathcal Z \cong B$, where $\mathcal Z$ denotes the Jiang-Su algebra \cite{JS}). But Villadsen algebras (of the first type, Definition \ref{D:Vill} below) provide examples of simple unital C*-algebras which are \emph{not} $\mathcal Z$-stable, and the motivation for the current work is to show (Corollary \ref{C:main2}) that these algebras are nevertheless singly generated. 
	
	Being non-$\mathcal Z$-stable, Villadsen algebras are not covered by the current classification theorem for C*-algebras (\cite{Eetal17}, \cite{Eetal}, \cite{EN}, \cite{TWW}, \cite{GLN1}, \cite{Cetal}, \cite{GLN2}). However, one regularity property they do possess is stable rank one; that is, the invertible elements in a Villadsen algebra form a norm dense subset of the algebra. Moreover, some partial classification results for Villadsen algebras are obtained in \cite{ELN} using the radius of comparison (or Cuntz semigroup).

	In this paper, we show that Villadsen algebras are singly generated. In fact, we show that any simple unital AH algebra with diagonal maps (Definition \ref{D:AH} below) is singly generated. To show these algebras are singly generated, we introduce the following concept: a C*-algebra $B$ has an AF-action if it contains a simple AF algebra $A$ and a C*-subalgebra $D$ such that 
\begin{enumerate}
\item $B$ is generated by $A$ and $D$,
\item $D$ commutes with a certain ``diagonal'' subalgebra of $A$,
\item $vdv^* \in D$ for any $d \in D$ and $v \in V$, 
\end{enumerate}
where $V$ denotes a special set of partial isometries in $A$ which are intimately connected with the diagonal subalgebra in the second condition (for a precise statement, see Definition \ref{D:AF}). Our main theorem (Theorem \ref{T:main}) states that any C*-algebra with an AF-action is singly generated. 

	In Section \ref{S:prelim}, we establish some definitions and some simple consequences of these definitions, and the remainder of the paper is dedicated to proving our main theorem at the end of Section \ref{S:main}. 


\section{Definitions}\label{S:prelim}

\begin{defn}\label{D:Vill}
	Let $(c_i)_{i\in \mathbb N}$, $(k_i)_{i\in \mathbb N}$ and $(l_i)_{i\in \mathbb N}$ be sequences of natural numbers, $X$ be a compact connected metric space, and $E_i$ be a set of cardinality $k_i$ for each $i \in \mathbb N$ such that, writing $X_1 = X$ and $X_{i+1} = X_{i}^{c_i}$,
\begin{enumerate}
	\item $E_i \subseteq X_i$, \label{Con:Vill1}
	\item the set
\begin{align*}
	E_{i+1} \cup \bigg ( \bigcup_{s=1}^{c_{i+1}} \pi_s(E_{i+2}) \bigg ) \cup \bigg (\bigcup_{j=3}^\infty \bigcup_{s=1}^{c_{i+1}\cdots c_{i+j-1}} \pi_s(E_{i+j})\bigg ) 
\end{align*}
is dense in $X_{i+1}$, where $\pi_s$ denotes the coordinate projection, \label{Con:Vill2}
	\item $\lim\limits_{i\to\infty}  \frac{l_1\cdots l_{i}}{(l_1+k_1)\cdots(l_{i}+ k_{i})} \not = 0$, \label{Con:Vill3}
\end{enumerate}
for each $i \in \mathbb N$. A \textit{Villadsen algebra} is the limit of an inductive sequence $(B_i,\phi_i)_{i\in \mathbb N}$ of C*-algebras, where $B_i = \M_{n_{i-1}}(\text C(X_i))$, $n_0 \in \mathbb N$ and $n_i = n_{i-1}(l_{i}+k_{i})$, and the seed for $\phi_i$ is given by 
\begin{multline*}
	\text C(X_i) \ni f \mapsto \text{diag}\Big \{\underbrace{f\circ \pi_1,\dotsc,f\circ \pi_1}_{s_{i,1}},\dotsc, \underbrace{f\circ \pi_{c_i},\dotsc,f\circ \pi_{c_i}}_{s_{i,c_i}},f(x_{i,1}),\dotsc,f(x_{i,k_i})\Big \}\\ \in \M_{l_{i}+k_i}\big ( \text C(X_{i+1})\big ), 
\end{multline*}
where $s_{i,t} \in \mathbb N$ for each $1 \leq t \leq c_i$ and $s_{i,1}+\cdots+s_{i,c_i} = l_i$. 
\end{defn}

Note that the above definition of a Villadsen algebra is more general than the original construction in \cite{V}; in addition to the algebras in \cite{V}, Definition \ref{D:Vill} also includes as a special case some algebras constructed by Goodearl in \cite{G} (see \cite[Remark 2.1]{ELN}). 

Given an arbitrary inductive sequence $(B_i,\phi_i)_{i\in \mathbb N}$ of C*-algebras, define $\phi_{i,i'} := \phi_{i'-1} \circ \cdots \circ \phi_{i}$ for $i' > i +1$ and $\phi_{i,i+1}: = \phi_i$. Suppose $(B_i,\phi_i)_{i\in \mathbb N}$ is as in Definition \ref{D:Vill}. Then $\phi_i$ is unital, and a direct calculation shows that for $i' > i+ 1$ the seed for $\phi_{i,i'}$ is (up to permutation) given by 
\begin{align}\label{E:composition}
	\text C(X_i) \ni f \mapsto \text{diag}\Big \{ &\underbrace{f\circ \pi_1,\dotsc,f\circ \pi_{c_i\cdots c_{i'-1}}}_{l_i\cdots l_{i'-1}},\\ \notag &\underbrace{f\circ \pi_1 (E_{i'-1}),\dotsc,f\circ \pi_{c_i\cdots c_{i'-2}} (E_{i'-1})}_{l_i\cdots l_{i'-2}},\\
\notag &\underbrace{f\circ \pi_1(E_{i'-2}),\dotsc,f\circ \pi_{c_i\cdots c_{i'-3}}(E_{i'-2})}_{l_{i}\cdots l_{i'-3}}1_{l_{i'-1}+k_{i'-1}},\\ \notag &\underbrace{f\circ \pi_1(E_{i'-3}),\dotsc,f\circ \pi_{c_{i}\cdots c_{i'-4}}(E_{i'-3})}_{l_i\cdots l_{i'-4}}1_{(l_{i'-1}+k_{i'-1})(l_{i'-2}+k_{i'-2})},\dotsc,\\ \notag &\underbrace{f\circ \pi_1(E_{i+1}),\dotsc,f\circ \pi_{c_i}(E_{i+1})}_{l_i}1_{(l_{i'-1}+k_{i'-1})\cdots(l_{i+2}+k_{i+2})},\\ \notag &f(E_i)1_{(l_{i'-1}+k_{i'-1})\cdots(l_{i+1}+k_{i+1})}\Big \} \in \M_{(l_i+k_i)\cdots(l_{i'-1}+k_{i'-1})}\big ( \text C(X_{i'}) \big ). 
\end{align}

\begin{defn}\label{D:AH}
	Let $B$ be the limit of an inductive sequence $(B_i,\phi_i)_{i\in \mathbb N}$ of unital C*-algebras, where each $\phi_i$ is unital. We call $B$ a (unital) \textit{AH algebra with diagonal maps} if $B_i = \bigoplus_{1 \leq j \leq K_i} \M_{n_{i,j}}(\text C(X_{i,j}))$, where $X_{i,j}$ is a compact connected metric space and $n_{i,j},K_i \in \mathbb N$, and if for any $i' > i$ the restriction of the map $\phi_{i,i'}$ to any direct summands $\M_{n_{i,j}}(\text C(X_{i,j}))$ and $\M_{n_{i',j'}}(\text C(X_{i',j'}))$ has a seed of the form $f \mapsto 0$ or $f \mapsto \text{diag}\{f\circ \lambda_1,\dotsc, f\circ \lambda_{m}\}$ for some continuous maps $\lambda_1,\dotsc,\lambda_m \colon X_{i',j'} \to X_{i,j}$. 
\end{defn}

	If $(B_i,\phi_i)_{i\in\mathbb N}$ is as in Definition \ref{D:AH} and each $\phi_i$ is injective, it is well-known that the limit algebra $B$ is simple if and only if, for any $i \in \mathbb N$ and nonzero $b \in B_i$, there is an $i_0 \geq i$ such that for every $i' \geq i_0$, $\phi_{i,i'}(b)(x) \not = 0$ for every $x \in \bigsqcup_{1\leq j \leq K_{i'}}X_{i',j}$ (\cite[Proposition 2.3]{EHT}). From this characterization of simplicity for $B$, one sees that the unital AF subalgebra $A$ of $B$ obtained as the limit of the inductive sequence $(A_i,\psi_i)_{i\in \mathbb N}$, where $A_i = \bigoplus_{1\leq j \leq K_i} \M_{n_{i,j}}$ and $\psi_i = \phi_i|_{A_i}$, is simple if $B$ is. 
	
	Letting $(B_i,\phi_i)_{i\in\mathbb N}$ be as in Definition \ref{D:Vill} once again, it is clear that the Villadsen algebra arising from this inductive sequence is an AH algebra with diagonal maps with injective connecting maps; moreover, from the above characterization of simplicity for such an algebra, one sees almost immediately that it is simple. Indeed, fix $i \in \mathbb N$ and let $f \in B_{i+1}$ be nonzero; then, by Condition \ref{Con:Vill2} of Definition \ref{D:Vill}, there exists some $i_0 \geq i+1$ and $y \in E_{i_0-1}$ for which $f\circ \pi_s (y) \not = 0$; we then see from Equation \eqref{E:composition} that for every $i' \geq i_0$, $\phi_{i+1,i'}(f)(x) \not = 0$ for any $x \in X_{i'}$.

	
\begin{defn}\label{D:AF}
	Let $B$ be a unital C*-algebra containing a simple separable unital AF subalgebra $A$ and a separable unital C*-subalgebra $D$. Let $(\bigoplus_{1\leq j\leq K_i} \M_{n_{i,j}},\phi_i)_{i\in \mathbb N}$ be a canonical inductive limit decomposition for $A$, where $n_{i,j},K_i \in \mathbb N$. Denote the set of canonical matrix units for $\M_{n_{i,j}}$ by $V_{i,j}$, and define 
\begin{align*}
	V:=\bigcup_{i\in \mathbb N} \bigcup_{j=1}^{K_i} V_{i,j}, \quad E_{i,j} := \{v\in V_{i,j} \mid v=v^*\}, \quad D_0 := \C\Bigg(\bigcup_{i\in \mathbb N} \bigcup_{j=1}^{K_i} E_{i,j}\Bigg). 
\end{align*}
We say that $B$ has an \textit{AF-action} if 
\begin{enumerate}[ref=\textup{(\arabic*)}]
	\item $B = \C(A,D)$, \label{Con:AF1}
	\item $[d,d'] = 0$ for any $d\in D$ and $d' \in D_0$,  \label{Con:AF2}
	\item $vdv^* \in D$ for any $d \in D$ and $v \in V$. \label{Con:AF3}
\end{enumerate} 
In the sequel, to emphasize the dependence of $B$ on $A$ and $D$, we may write $B(A,D)$ for $B$; moreover, associated to $B$ is the inductive limit decomposition $(\bigoplus_{1\leq j\leq K_i} \M_{n_{i,j}},\phi_i)_{i\in \mathbb N}$ of $A$, which we may simply refer to as the ``associated decomposition of $A$.''
\end{defn}

	The following lemmas are straightforward consequences of this definition. 
	
\begin{lemm}
	A simple AH algebra with diagonal maps has an AF-action. In particular, a Villadsen algebra has an AF-action. 
\end{lemm}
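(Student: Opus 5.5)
We take $A$ to be the AF subalgebra described after Definition \ref{D:AH}, and $D$ to be the algebra generated by the "central" continuous functions.

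Let $B=\lim(B_i,\phi_i)$ be a simple unital AH algebra with diagonal maps, $B_i=\bigoplus_j \M_{n_{i,j}}(\text C(X_{i,j}))$. We may assume the connecting maps are injective. Indeed, a simple inductive limit can be rewritten with injective maps by passing to quotients $B_i/\ker\phi_{i,\infty}$. For diagonal maps this quotient restricts each summand to a closed subset of $X_{i,j}$, so it is again of the same form. Connectedness of the new spaces needs care; if it fails we split into components, which only refines the summands.

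\textbf{Choice of $A$ and $D$.} Let $A_i=\bigoplus_j \M_{n_{i,j}}\cdot 1$ be the constant matrices. The diagonal form of the seeds shows that $\phi_i(A_i)\subseteq A_{i+1}$: a constant $f$ is sent to constants, and the restriction of $\phi_{i,i'}$ to a summand is unitarily equivalent to $f\mapsto\mathrm{diag}(f\circ\lambda_k)$. The subtle point is that we need the seed form itself, not merely up to unitary, so that the canonical matrix units of $A_i$ go to sums of canonical matrix units of $A_{i+1}$. We will read Definition \ref{D:AH} as giving this on the nose, with the permutation absorbed into the choice of matrix units.

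Then $A=\overline{\bigcup A_i}$ is a unital AF algebra, and it is simple by the remark following Definition \ref{D:AH}. Its associated decomposition is $(A_i,\phi_i|_{A_i})$, which is canonical because the maps send matrix units to sums of matrix units. Let $D$ be the closure of $\bigcup_i \phi_{i,\infty}(D_i)$, where $D_i=\bigoplus_j \text C(X_{i,j})e^{(i,j)}_{11}$ consists of functions placed in the $(1,1)$ corner. We then enlarge $D$ to be closed under $v(\cdot)v^*$; concretely, we take $D'_i=\bigoplus_j\{\mathrm{diag}(f_1,\dots,f_{n_{i,j}})\}$, the diagonal matrices over $\text C(X_{i,j})$. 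The seeds map diagonals to diagonals, so $D=\overline{\bigcup\phi_{i,\infty}(D'_i)}$ is a separable unital commutative subalgebra.

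\textbf{Checking the conditions.}
\begin{enumerate}
\item Condition \ref{Con:AF1} holds because $f e_{kl}=(f e_{kk})e_{kl}$ with $fe_{kk}\in D$ and $e_{kl}\in A$. So each $B_i$ is generated by $A_i$ and $D'_i$, and hence $B=\C(A,D)$.
\item Condition \ref{Con:AF2}: $D_0$ is generated by the images of the diagonal matrix units $e_{kk}$. These are diagonal scalar projections, and they commute with the diagonal function matrices. This uses that the image of a diagonal matrix in $B_{i'}$ is again diagonal, which is exactly the diagonal-seed hypothesis.
\item Condition \ref{Con:AF3}: for $v=e_{kl}\in V_{i,j}$ and $d\in\phi_{i',\infty}(D'_{i'})$ we may take $i'\ge i$. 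Then $v$ is a sum of canonical matrix units $e_{pq}$ of $A_{i'}$, matched along an injective correspondence $q\mapsto p$. It follows that $vdv^*$ is again diagonal, with entries permuted or moved and zeros elsewhere, so it lies in $D'_{i'}$. Passing to limits gives the condition on all of $D$.
\end{enumerate}

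The Villadsen case follows because, as noted before Definition \ref{D:AF}, a Villadsen algebra is a simple AH algebra with diagonal maps.

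\textbf{Main obstacle.} The delicate step is ensuring that $\phi_{i}$ maps canonical matrix units exactly onto sums of canonical matrix units, with no unitary twist, and that this is compatible with the injectivity reduction. We will handle it by taking the seed literally as a block-diagonal embedding $e_{kl}\mapsto\sum_m e_{(m,k),(m,l)}$ and choosing the matrix units of $B_{i+1}$ accordingly.
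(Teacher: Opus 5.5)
Your proof is correct and follows the paper's argument. Your $D=\overline{\bigcup_i\phi_{i,\infty}(D'_i)}$ of diagonal matrices over $\mathrm C(X_{i,j})$ is exactly the paper's $\C(\{p\otimes f\})$, and you verify the three conditions the same way: $fe_{kl}=(fe_{kk})e_{kl}$, commutativity of $D\supseteq D_0$, and $vdv^*$ being diagonal. Your remarks on injectivity and on reading the seeds literally spell out what the paper leaves implicit; note only that splitting the quotient spaces into components is unnecessary, and could require infinitely many summands, since Definition \ref{D:AF} never uses connectedness.
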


\begin{proof}
	Let $B = \lim_{i\to\infty}(B_i,\phi_i)$ be a simple AH algebra with diagonal maps, where $B_i$ and $\phi_i$ are as in Definition \ref{D:AH}, and let $A = \lim_{i\to\infty}(A_i,\psi_i)$ be the AF subalgebra of $B$ as described in the paragraph following Definition \ref{D:AH}. Denote the set of canonical matrix units for $\M_{n_{i,j}}$ by $V_{i,j}$ and define the sets $V$, $E_{i,j}$, and $D_0$ as in Definition \ref{D:AF}. Moreover, define 
\begin{align*}
	D := \C \Big ( \{p\otimes f \mid p \in E_{i,j},\, f \in \text C(X_{i,j}),\, i \in \mathbb N, \, 1 \leq j \leq K_i\} \Big ).
\end{align*}

	Notice $D$ is a separable unital C*-subalgebra of $B$. Furthermore, that $B = \C(A,D)$ follows from the fact that any $b \in\M_{n_{i,j}}(\text C(X_{i,j}))$ may be written as a finite sum of elements of the form $v (p\otimes f) v'$ for $v,v' \in V_{i,j}$, $p \in E_{i,j}$, and $f \in \text C(X_{i,j})$; since $D$ is commutative and $D_0 \subseteq D$, we have $[d,d'] = 0$ for any $d \in D$ and $d' \in D_0$; finally, clearly $v(p\otimes f)v^* \in D$ for any $v \in V_{i,j}$, $p \in E_{i',j'}$, and $f \in \text C(X_{i',j'})$ so that $v d v^* \in D$ for any $v \in V$ and $d \in D$. 
\end{proof}

\begin{lemm}
	Let $B = B(A,D)$ have an AF-action, and let $C$ be a separable unital C*-algebra. Then $B \otimes C$ has an AF-action. 
\end{lemm}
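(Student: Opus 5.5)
The plan is to keep the same simple AF algebra and enlarge only the ``diagonal'' part. Identify $A$ with $A\otimes 1_C\subseteq B\otimes C$. This is a simple separable unital AF subalgebra of $B\otimes C$, because $C$ is unital and the map $a\mapsto a\otimes 1_C$ is an injective unital $*$-homomorphism. For its associated decomposition we take the decomposition $(\bigoplus_{j}\M_{n_{i,j}},\phi_i)_{i\in\mathbb N}$ of $A$, tensored with $1_C$. The canonical matrix units are then $V\otimes 1_C$, the self-adjoint ones are $E_{i,j}\otimes 1_C$, and the resulting diagonal algebra is $D_0\otimes 1_C$. Next define
\begin{align*}
	\widetilde D := \C\big( D\otimes 1_C \cup 1_B\otimes C\big),
\end{align*}
which is the image of $D\otimes C$ in $B\otimes C$. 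It is separable and unital because $D$ and $C$ are.

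We then verify the three conditions of Definition \ref{D:AF}, one at a time.

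For Condition \ref{Con:AF1}, note that $\C(A\otimes 1_C,\widetilde D)$ contains $A\otimes 1_C$ and $D\otimes 1_C$, and hence contains $\C(A,D)\otimes 1_C = B\otimes 1_C$. It also contains $1_B\otimes C$. Elementary tensors $b\otimes c=(b\otimes 1)(1\otimes c)$ span a dense subspace of $B\otimes C$, so this subalgebra is all of $B\otimes C$. This holds for any C*-tensor norm, since elementary tensors are dense in every completion.

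For Condition \ref{Con:AF2}, it suffices to check commutation on generators. An element $d'\otimes 1$ with $d'\in D_0$ commutes with $d\otimes 1$ by Condition \ref{Con:AF2} for $B$. It commutes trivially with $1\otimes c$. The commutant of a set is a C*-algebra, so $d'\otimes 1$ commutes with all of $\widetilde D$.

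Condition \ref{Con:AF3} is the only step needing a small argument, since $\widetilde D$ is generated rather than spanned by the two pieces. Fix $v\in V$. Then $v^*v$ is a diagonal matrix unit, so it lies in $D_0$. Moreover $vdv^*=v(v^*v)d(v^*v)v^*$, and $v^*v\otimes 1$ commutes with $\widetilde D$ by the previous step. Hence the map $\theta(x)=(v\otimes 1)x(v^*\otimes 1)$ is multiplicative on $\widetilde D$:
\begin{align*}
	\theta(x)\theta(y)=(v\otimes1)\,x\,(v^*v\otimes 1)\,y\,(v^*\otimes1)=(v\otimes1)(v^*v\otimes1)\,xy\,(v^*\otimes1)=\theta(xy).
\end{align*}
The map $\theta$ is also linear, $*$-preserving and continuous. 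It therefore suffices to check that $\theta$ sends generators into $\widetilde D$, and then $\theta(\widetilde D)\subseteq \widetilde D$ follows. On generators we have $\theta(d\otimes 1)=vdv^*\otimes 1\in \widetilde D$ by Condition \ref{Con:AF3} for $B$. We also have $\theta(1\otimes c)=vv^*\otimes c=(vv^*\otimes 1)(1\otimes c)$. Here $vv^*\in D_0$, and $D_0\subseteq D$ (I expect this to be justified by applying Condition \ref{Con:AF3} with $d=1$). Alternatively, one can replace $\widetilde D$ by the algebra generated by $(D\cup D_0)\otimes 1$ and $1\otimes C$, which avoids needing $D_0\subseteq D$; Condition \ref{Con:AF2} still holds since $D_0$ is commutative. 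With either choice, this element lies in $\widetilde D$, which completes the proof.
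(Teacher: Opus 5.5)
Your proof is correct and follows the same approach as the paper: you take $A\otimes 1_C$ with the tensored decomposition (so the matrix units and diagonal algebra become $V\otimes 1_C$ and $D_0\otimes 1_C$), take the second subalgebra to be the image of $D\otimes C$, and check the three conditions, which the paper asserts without detail and you actually verify. For Condition (3), your multiplicativity argument is valid (indeed $vv^* = v1v^*\in D$ because $D$ contains $1_B$), but it is quicker to use $(v\otimes 1)(d\otimes c)(v^*\otimes 1) = vdv^*\otimes c$ and extend by linearity and continuity, since elementary tensors $d\otimes c$ span a dense subspace of that image.
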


\begin{proof}	 
	 Define $\mathcal A_i := \bigoplus_{1\leq j\leq K_i} \M_{n_{i,j}}\otimes \mathbb C 1_C$ for each $i \in \mathbb N$, where $(\bigoplus_{1\leq j\leq K_i} \M_{n_{i,j}},\phi_i)_{i\in \mathbb N}$ is the associated decomposition of $A$, so that $\mathcal A= \overline{\bigcup_{i\in\mathbb N} \mathcal A_i}$ is a simple AF subalgebra of $B\otimes C$. Identify the set  $V_{i,j}$ of canonical matrix units for $\M_{n_{i,j}}$ with the set $\mathcal V_{i,j} = \{v\otimes 1_C \mid v \in V_{i,j}\}$ in $B \otimes C$. Then, writing $\mathcal V = \bigcup_{i\in\mathbb N} \bigcup_{1\leq j \leq K_i} \mathcal V_{i,j}$, $\mathcal E_{i,j} = \{\mathbf v \in \mathcal V_{i,j}\mid \mathbf v = \mathbf v^*\}$, $\mathcal D_0 = \C (\bigcup_{i\in\mathbb N} \bigcup_{1\leq j \leq K_i} \mathcal E_{i,j} )$, and $\mathcal D = D \otimes C$, it follows that $B\otimes C = \C(\mathcal A,\mathcal D)$, $[\mathbf d,\mathbf d']=0$ for any $\mathbf d \in \mathcal D$ and $\mathbf d' \in \mathcal D_0$, and $\mathbf v \mathbf d \mathbf v^* \in \mathcal D$ for any $\mathbf d \in \mathcal D$ and $\mathbf v \in \mathcal V$. 
\end{proof}


%
%

\section{A Generator for an Algebra with an AF-action}\label{S:main}

\subsection{Preliminary Lemmas}

	Let $B$ be a unital C*-algebra, and let $a \in B$ be such that $q_iaq_j = 0$ for $i > j$, where $(q_i)_{1 \leq i \leq n} \subseteq B$ is a finite sequence of nonzero mutually orthogonal projections summing to the identity ($a$ is ``upper triangular'' with respect to $(q_i)$). It is well-known that $\sigma(a) \subseteq \bigcup_{1 \leq i \leq n} \sigma(q_iaq_i)$, and the following lemma gives a similar result for infinite sequences. 

\begin{lem}\label{L:upT}
	Let $B$ be a unital C*-algebra, let $a \in B$, and let $(p_i)_{i\in\mathbb N} \subseteq B$ be a sequence of nonzero mutually orthogonal projections such that 
	\begin{enumerate}
		\item $(1-\sum\limits_{i=1}^n p_i)a\sum\limits_{i=1}^n p_i = 0$ for each $n\in \mathbb N$, \label{Con:upT}
		\item $\lim\limits_{n\to\infty} \|(1-\sum\limits_{i=1}^n p_i)a(1-\sum\limits_{i=1}^n p_i)\| = 0$, \label{Con:trace}
		\item $\sigma(p_iap_i) \cap \sigma(p_{i'}ap_{i'}) = \text{\O}$ for $i \not = i'$, \label{Con:upTspec}
	\item $0 \not \in \sigma(p_iap_i)$ for any $i\in \mathbb N$. \label{Con:upTspec2}
	\end{enumerate}
Then 
\begin{align*}
	\sigma(a) \subseteq \bigcup_{i=1}^\infty \sigma(p_iap_i) \cup \{0\}\quad \text{and} \quad (p_i)_{i\in \mathbb N} \subseteq \C(a). 
\end{align*}
\end{lem}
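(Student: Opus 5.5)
Write $P_n := \sum_{i=1}^n p_i$ and $Q_n := 1-P_n$. By Condition \eqref{Con:upT}, $Q_n a P_n = 0$, so $a$ is upper triangular with respect to the finite family $p_1,\dotsc,p_n,Q_n$. That is, $a = \sum_{i\le i'\le n} p_i a p_{i'} + P_n a Q_n + Q_n a Q_n$, with no entries below the diagonal. The plan has two parts: first the spectral inclusion, then the claim that each $p_i$ lies in $\C(a)$.

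\textbf{Spectral inclusion.} Apply the finite upper-triangular fact quoted before the lemma. Work in the corners $p_iBp_i$ and $Q_nBQ_n$, where the spectra are computed in the unital corners. This gives
\[
\sigma(a)\subseteq \bigcup_{i=1}^n \sigma(p_iap_i)\cup \sigma(Q_naQ_n)\qquad\text{for every } n .
\]
Now let $\lambda\notin \bigcup_i\sigma(p_iap_i)\cup\{0\}$. Choose $n$ with $\|Q_naQ_n\|<|\lambda|$, which is possible by Condition \eqref{Con:trace}. Then $\lambda\notin\sigma(Q_naQ_n)$, so $\lambda\notin\sigma(a)$. One should note that if $Q_n=0$ the corner term is vacuous, so nothing changes in that case.

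\textbf{Membership $p_i\in\C(a)$.} The natural tool is holomorphic functional calculus, applied to the finite block decomposition $p_1,\dotsc,p_n,Q_n$ with $n\ge i$ and $\|Q_naQ_n\|$ small. The only subtlety is that the spectrum may accumulate. Each $\sigma(p_iap_i)$ is a compact set. By Condition \eqref{Con:upTspec2} it avoids $0$, and by Condition \eqref{Con:upTspec} it is disjoint from the other sets $\sigma(p_{i'}ap_{i'})$. However, the union of the other sets could accumulate at $\sigma(p_iap_i)$. To deal with this, first fix $n>i$ so that $r:=\|Q_naQ_n\|$ is less than the distance $\delta>0$ from $0$ to $\sigma(p_iap_i)$.

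With $n$ fixed, the finitely many disjoint compact sets $\sigma(p_1ap_1),\dotsc,\sigma(p_nap_n)$ lie in the compact set $\sigma(a')$ of the finite triangular decomposition. Here I use $\sigma(Q_naQ_n)$, computed in $Q_nBQ_n$, which is contained in the closed disc $\{|z|\le r\}$. The set $\sigma(p_iap_i)$ is disjoint from the disc $\{|z|\le r\}$ and from the other $\sigma(p_{i'}ap_{i'})$, $i'\le n$. Hence there is an open set $U\supseteq\sigma(p_iap_i)$ whose closure misses the rest of the finite spectral cover.

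Let $f$ be the holomorphic function equal to $1$ on $U$ and $0$ near the rest; it is holomorphic on a neighbourhood of $\sigma(a)$. Then $f(a)\in\C(a)$, since $f$ can be uniformly approximated on a neighbourhood of $\sigma(a)$ by polynomials in $z$ by Runge. This needs $0\in\sigma(a)$ or at least rational approximation; rational functions with poles off the spectrum also stay in $\C(a)$ because the inverses exist there. The remaining task is to show $f(a)=p_i$.

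\textbf{Main obstacle: identifying $f(a)$.} For a block upper-triangular operator, the resolvent $(z-a)^{-1}$ is block upper triangular, with diagonal blocks equal to the corner resolvents. Hence $f(a)$ is upper triangular with diagonal blocks $f(p_{i'}ap_{i'})$, namely $\delta_{ii'}p_{i'}$, and $f(Q_naQ_n)=0$. So $e:=f(a)$ is an idempotent. However, it need not equal $p_i$, since it may have off-diagonal parts $p_iap_{i'}$ with $i<i'$ and $p_{i'}ap_i$ with $i'<i$.

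To remove them I would proceed by induction on $i$. For $i=1$, the idempotent $e_1=f(a)$ has the form $p_1 + p_1 x(1-p_1)$. I would then use the range projection: $e_1e_1^*$ together with $C^*$-closure gives the range projection of $e_1$, which is $p_1$ (indeed $\C(a)$ contains $e_1^*$). Concretely, $p_1 = e_1(e_1+e_1^*-1)^{-1}$, a standard formula for the range projection of an idempotent. For general $i$, apply the same argument to the spectral idempotent $g(a)$ for the set $\bigcup_{i'\le i}\sigma(p_{i'}ap_{i'})$. Its range projection is $P_i$, so $P_i\in\C(a)$, and hence $p_i=P_i-P_{i-1}\in\C(a)$. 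Verifying that the range of this idempotent is exactly $P_iB$ is the step I expect to need the most care; it follows from the triangular form, since $g(a)P_i=P_i$ and $g(a)=P_ig(a)$.
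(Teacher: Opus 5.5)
Your proof is correct. For the spectral inclusion it matches the paper's: the paper iterates the finite triangular fact along the tails $P_{n-1}^\perp aP_{n-1}^\perp$ and lets $\|P_m^\perp aP_m^\perp\|\to0$ force $\lambda=0$, and you run the contrapositive in one step.

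For the membership $p_i\in\C(a)$ you take a different route.

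\textbf{The paper's route.} It first proves the tail inclusion $\sigma(P_n^\perp aP_n^\perp)\subseteq\bigcup_{j>n}\sigma(p_jap_j)\cup\{0\}$. From this it gets $\sigma(p_nap_n)\cap\sigma(P_n^\perp aP_n^\perp)=\emptyset$. It then cites Olsen--Zame and Davidson--Paulsen for the $2\times2$ triangular element $P_{n-1}^\perp aP_{n-1}^\perp$ to get $p_n\in\C(P_{n-1}^\perp aP_{n-1}^\perp)$. Finally it inducts, using that this compression lies in $\C(a)$ once $p_1,\dots,p_{n-1}$ do.

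\textbf{Your route.} You work with $a$ itself and the splitting $(P_i,1-P_i)$. Truncating at a large $n$ places $\sigma((1-P_i)a(1-P_i))$ inside $\bigcup_{i<i'\le n}\sigma(p_{i'}ap_{i'})\cup\{|z|\le r\}$. This compact set is disjoint from $\bigcup_{i'\le i}\sigma(p_{i'}ap_{i'})$. Hence the Riesz idempotent $g(a)=P_i+P_ig(a)(1-P_i)$ lies in $\C(a)$, and so does its range projection $P_i=g(a)\bigl(g(a)+g(a)^*-1\bigr)^{-1}$. In effect you reprove the cited fact inline. You get every $P_i$ directly, with no induction and no use of the infinite tail inclusion. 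The paper's version is shorter only because it outsources this step.

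\textbf{Points to tighten.}
\begin{enumerate}
\item The resolvent of a block-triangular element is block-triangular only for $z$ off the union of the diagonal spectra, not for every $z\notin\sigma(a)$. So choose the contour inside the domain of $g$ but outside that union.
\item Polynomial approximation via Runge fails in general. Use rational functions, or equivalently holomorphic calculus in $\C(a)+\mathbb C1$. Then $g(0)=0$ gives $g(a)\in\C(a)$ even when $1\notin\C(a)$.
\item The identities $g(a)P_i=P_i$ and $P_ig(a)=g(a)$ are immediate from the coarse splitting $(P_i,1-P_i)$. This is because $g\equiv1$ near $\sigma(P_iaP_i)$ and $g\equiv0$ near $\sigma((1-P_i)a(1-P_i))$.
\item Your preliminary idempotent for $\sigma(p_iap_i)$ alone and the ``induction on $i$'' are unnecessary.
\item Choose $r$ below the distance from $0$ to $\bigcup_{i'\le i}\sigma(p_{i'}ap_{i'})$, not to $\sigma(p_iap_i)$.
\end{enumerate}
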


\begin{proof}
	Define $P_n:=\sum_{1\leq i \leq n} p_i$, and write $1-P_n = P_n^\perp$ for each $n \in \mathbb N$; also, define $P_0 := 0$ so that $P_0^\perp = 1$. Notice for any $n \in \mathbb N$, $P_{n-1}^\perp = p_n + P_n^\perp$, $p_nP_n^\perp = 0$, and $P_n^\perp(P_{n-1}^\perp a P_{n-1}^\perp)  p_n = P_n^\perp a P_n p_n = 0$ by Condition \ref{Con:upT}. Fixing $n \in \mathbb N$, it then follows from the paragraph preceding this lemma that $\sigma(P_{n-1}^\perp a P_{n-1}^\perp) \subseteq \sigma(p_nap_n) \cup \sigma(P_n^\perp a P_n^\perp)$; by induction, 
\begin{align}\label{E:upT}
	\sigma(P_{n-1}^\perp aP_{n-1}^\perp ) \subseteq \bigcup_{j=n}^m \sigma(p_{j}ap_j) \cup \sigma(P_{m}^\perp a P_m^\perp), \quad \forall m \geq n. 
\end{align}
If $\lambda \in \sigma(P_{n-1}^\perp aP_{n-1}^\perp )$ and $\lambda \not \in \sigma(p_jap_j)$ for any $j \geq n$, Equation \eqref{E:upT} implies $\lambda \in \sigma (P_j^\perp a P_j^\perp)$ for every $j \geq n$; then, by Condition \ref{Con:trace}, $\lambda = 0$ and 
\begin{align}\label{E:inclusion}
	\sigma(P_{n-1}^\perp aP_{n-1}^\perp ) \subseteq \bigcup_{j=n}^\infty \sigma(p_{j}ap_j) \cup \{0\}.  
\end{align}
Taking $n=1$, we have the desired containment for $\sigma(a)$. 

	Let $n \in \mathbb N$ be arbitrary again. By Equation \eqref{E:inclusion} and Conditions \ref{Con:upTspec} and \ref{Con:upTspec2}, $\sigma(p_nap_n) \cap \sigma(P_{n}^\perp a P_n^\perp) = \text{\O}$. It follows that 
\begin{align}\label{E:upTfinal}
	p_n \in \C(P_{n-1}^\perp aP_{n-1}^\perp);
\end{align}
we refer the reader to \cite[Theorem 1]{OZ} and \cite[p.\ 22]{DP} for the details. If $p_1,\dotsc,p_{n-1} \in \C(a)$, expanding  $P_{n-1}^\perp a P_{n-1}^\perp$ in terms of $p_1,\dotsc,p_{n-1}$ reveals that $P_{n-1}^\perp a P_{n-1}^\perp \in \C(a)$. Now, taking $n =1$ in Equation \eqref{E:upTfinal}, we see $p_1 \in \C(a)$; thus $(p_i)_{i\in \mathbb N} \subseteq \C(a)$ by induction. 
\end{proof}

\begin{lem}\label{L:map}
Let $B$ be a unital C*-algebra, and suppose $B$ contains a unital C*-subalgebra $D$ and a finite set $\{v_{k}\}_{1\leq k \leq n}$ of nonzero partial isometries such that 
\begin{enumerate}[label=(\alph*)]
	\item $v_1$ is the range projection of each $v_k$, i.e., $v_1 = v_{k}v_{k}^*$, \label{Con:map1}
	\item the source projections form a partition of unity for $B$, i.e., $(v_{k}^*v_{k})(v_{k'}^*v_{k'}) = 0$ for $k \not = k'$ and $\sum_{1\leq k \leq n} (v_{k}^*v_{k})= 1$,\label{Con:map2}
	\item the source projections commute with $D$, i.e., $[v_{k}^*v_{k},d]=0$ for any  $d \in D$, \label{Con:map3}
	\item $v_kdv_k^* \in D$ for any $d \in D$.  \label{Con:map4}
\end{enumerate}
Then, the map 
\begin{align*}
	\Phi\colon \M_n\big ((v_1v_1^*)  D (v_1 v_1^*)\big ) \to \C(D,v_1,\dotsc,v_n), \quad [b_{i,j}]_{i,j=1}^{n}\mapsto \sum_{i=1}^{n} \sum_{j=1}^{n} v_i^* b_{i,j}v_{j}
\end{align*}
is a $*$-isomorphism. 
\end{lem}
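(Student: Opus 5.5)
Write $e_k := v_k^*v_k$ for the source projections and $p := v_1 = v_1v_1^*$; by (a) each $v_k$ has range projection $p$, and $v_1$ is itself a projection. The approach is to check directly that $\Phi$ is a $*$-homomorphism, that it is injective, and that its image is all of $\C(D,v_1,\dotsc,v_n)$.

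\emph{Step 1: $\Phi$ is a $*$-homomorphism.} Linearity and $*$-preservation are immediate: $\Phi([b_{i,j}])^* = \sum v_j^* b_{i,j}^* v_i = \Phi([b_{i,j}]^*)$. For multiplicativity, compute
\[
\Phi(b)\Phi(c) = \sum_{i,j,k,l} v_i^* b_{i,j} v_j v_k^* c_{k,l} v_l .
\]
The key point is that $v_jv_k^* = \delta_{jk}\, p$. For $j=k$ this is (a). For $j \neq k$, note $v_j v_k^* = v_j e_j e_k v_k^*$, and $e_je_k = 0$ by (b). Since $b_{i,j} \in pDp$, we get $b_{i,j}\, p\, c_{j,l} = b_{i,j} c_{j,l}$. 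Hence the sum collapses to $\Phi(bc)$. So far only (a) and (b) are used.

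\emph{Step 2: injectivity.} Conjugating recovers the matrix entries:
\[
v_k \Phi(b) v_l^* = \sum_{i,j} v_kv_i^* b_{i,j} v_jv_l^* = p\, b_{k,l}\, p = b_{k,l}.
\]
So $\Phi(b) = 0$ forces $b = 0$.

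\emph{Step 3: the image is a C*-algebra containing the generators.} Since $\Phi$ is an injective $*$-homomorphism of C*-algebras, its image is closed. The image lies in $\C(D,v_1,\dotsc,v_n)$ because $v_1 = p \in D$ (as $p = v_1 1 v_1^*$ by (d) with $d=1$) and each $b_{i,j} \in D$. It remains to show the image contains $D$ and each $v_k$.

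For $v_k$: take the matrix with $p$ in the $(1,k)$ entry and zeros elsewhere. Its image is $v_1^* p v_k = p v_k = v_k$.

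For $d \in D$: the main obstacle is here, and it is where (c) and (d) enter. By (b) and (c),
\[
d = \sum_{i,j} e_i d e_j = \sum_i e_i d e_i = \sum_i v_i^*(v_i d v_i^*) v_i .
\]
The middle factor $v_i d v_i^*$ lies in $D$ by (d), and it is compressed by $p$, so it lies in $pDp$. Therefore $d = \Phi(\mathrm{diag}(v_1dv_1^*, \dotsc, v_ndv_n^*))$.

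Thus the image of $\Phi$ is a C*-subalgebra of $\C(D,v_1,\dotsc,v_n)$ containing all the generators, so it equals $\C(D,v_1,\dotsc,v_n)$, and $\Phi$ is a $*$-isomorphism.
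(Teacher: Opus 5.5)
Your proof is correct and follows essentially the same route as the paper: multiplicativity from $v_jv_k^*=\delta_{jk}v_1$, injectivity by compressing with $v_k(\cdot)v_l^*$, and surjectivity by showing that $v_k$ and $d=\sum_i v_i^*(v_idv_i^*)v_i$ lie in the image. You also make explicit two points the paper leaves tacit: that the image of a $*$-homomorphism is closed, which is what turns ``contains the generators'' into ``onto,'' and that $v_jv_k^*=v_je_je_kv_k^*=0$ for $j\neq k$.
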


\begin{proof}
	It is clear that $\Phi$ is linear and preserves adjoints. For multiplicativity, notice
\begin{align*}
	\Phi\big ([b_{i,j}]\big) \Phi \big ([c_{i,j}]\big) &= \Big ( \sum_{i=1}^n \sum_{j=1}^n v_{i}^* b_{i,j} v_{j} \Big ) \Big ( \sum_{i=1}^n \sum_{j=1}^n v_{i}^* c_{i,j} v_{j} \Big ) \\ 
	&= \Big ( \sum_{i=1}^n v_i^* b_{i,1} v_1 + \cdots + \sum_{i=1}^n v_i^* b_{i,n} v_n \Big ) \Big ( \sum_{j=1}^n v_1^* c_{1,j} v_{j} + \cdots + \sum_{j=1}^n v_n^* c_{n,j} v_{j} \Big ) \\
	&= \Big ( \sum_{i=1}^n v_i^*b_{i,1}v_1 \Big ) \Big ( \sum_{j=1}^n v_1^*c_{1,j}v_{j} \Big ) + \cdots + \Big ( \sum_{i=1}^n v_i^*b_{i,n}v_n \Big )\Big ( \sum_{j=1}^n v_n^*c_{n,j}v_{j} \Big) \\
	&=  \sum_{i=1}^n \sum_{j=1}^{n} v_i^*  \Big (\sum_{k=1}^{n} b_{i,k}c_{k,j} \Big )v_j \\ 
	&= \Phi \big ([a_{i,j}]\big ), 
\end{align*} 
where $a_{i,j} = \sum_{1\leq k \leq n}b_{i,k}c_{k,j}$ and where the third equality is a result of Condition \ref{Con:map2}. Hence, $\Phi([b_{i,j}]) \Phi ([c_{i,j}]) = \Phi([b_{i,j}][c_{i,j}])$. 

Now, notice $v_1 = v_1v_1^* = v_11v_1^* \in D$ by Condition \ref{Con:map1} and Condition \ref{Con:map4}. Fixing some $1\leq k \leq n$ and defining $[b_{i,j}]$ such that $b_{1,k} = (v_{1}v_{1}^*)v_1(v_{1}v_{1}^*) = v_1$ and $b_{i,j} = 0$ otherwise, it follows from the previous sentence that $[b_{i,j}]\in \M_n((v_1v_1^*) D (v_1 v_1^*) )$. We then see $v_k$ is in the image of $\Phi$ since $\Phi ([b_{i,j}]  ) = v_1^*v_1v_k = v_1v_k = v_kv_k^*v_k = v_k$. Moreover, for any $d \in D$,
\begin{align}\label{E:map}
	d = \Big (\sum_{i=1}^{n} v_i^*v_i\Big ) d \Big (\sum_{i=1}^{n} v_i^*v_i\Big )= \sum_{i=1}^n (v_i^*v_i) d  (v_i^*v_i)
\end{align}
by Condition \ref{Con:map2} and Condition \ref{Con:map3}; by Condition \ref{Con:map1}, 
\begin{align}\label{E:map2}
	\sum_{i=1}^n (v_i^*v_i) d  (v_i^*v_i) = \sum_{i=1}^nv_{i}^*(v_1v_1^*)d_i(v_1v_1^*)v_i, 
\end{align}
where $d_i = v_{i}dv_i^* \in D$ for each $1 \leq i \leq n$. Putting Equation \eqref{E:map} and Equation \eqref{E:map2} together, we see $d$ is in the image of $\Phi$ since $\Phi([c_{i,j}]) = d$ when $c_{k,k} = (v_1v_1^*)d_k(v_1v_1^*)$, for each $1 \leq k \leq n$, and $c_{i,j} = 0$ otherwise. Thus $\Phi$ is onto. 

Finally, notice if $\Phi([b_{i,j}]) = 0$, then 
\begin{align*}
	0= v_k \Phi\big([b_{i,j}]\big) v_{l}^* =  \sum_{i=1}^{n} \sum_{j=1}^{n} v_kv_i^* b_{i,j}v_{j} v_{l}^* = \delta_{k,i}b_{i,j}\delta_{j,l}
\end{align*}
for every $1 \leq k,l \leq n$, where $\delta$ denotes the Kronecker delta function; in particular, $\Phi$ is injective. 
\end{proof}

Lemma \ref{L:OZcor} below references a result from the paper of Olsen and Zame \cite{OZ}; we reproduce a version of it here for the reader's convenience. 

\begin{OZ}[Olsen and Zame]
	Let $A$ be a unital C*-algebra generated by the $k(k+1)/2$ invertible self-adjoint elements $a_1,\dotsc,a_{k(k+1)/2}$ with pairwise disjoint spectra. Then, $\M_k(A)$ is generated by the upper triangular matrix 
\begin{align*}
	\begin{bmatrix}
		a_1&a_2&\cdots&a_k\\
		0&a_{k+1}&\cdots&a_{2k-1}\\
		\vdots&\ddots&\ddots&\vdots\\
		0&\cdots&0&a_{k(k+1)/2}
	\end{bmatrix}. 
\end{align*}
\end{OZ}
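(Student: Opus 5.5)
The plan is to recover a full system of matrix units for $\M_k(A)$ from the single matrix $T$ displayed in the statement, and then to recover every entry $a_m$ in a corner. Write $e_{ij}$ for the matrix units of $\M_k$, and write $a_{(i,j)}$ for the $(i,j)$ entry of $T$ when $i\le j$.

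\textbf{Step 1 (diagonal projections).} Apply the finite version of Lemma \ref{L:upT} to $T$ with $p_i=e_{ii}\otimes 1_A$; this is the case where $p_i=0$ for $i>k$, and the induction in that proof still goes through. The matrix $T$ is upper triangular, so Condition \ref{Con:upT} holds. Condition \ref{Con:trace} is automatic, since the tail vanishes. The diagonal compressions $p_iTp_i=e_{ii}\otimes a_{(i,i)}$ have pairwise disjoint spectra, none containing $0$, because the $a_m$ are invertible with disjoint spectra. Hence every $e_{ii}\otimes 1$ lies in $\C(T)$. Consequently every compression
\[
x_{ij}:=(e_{ii}\otimes 1)\,T\,(e_{jj}\otimes 1)=e_{ij}\otimes a_{(i,j)}
\]
lies in $\C(T)$. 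In particular $\C(a_{(i,i)})\otimes e_{ii}\subseteq \C(T)$.

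\textbf{Step 2 (off-diagonal partial isometries).} For $i<j$, the element $x_{ij}^*x_{ij}=e_{jj}\otimes a_{(i,j)}^2$ is invertible in the corner $e_{jj}\otimes A$. Polar decomposition inside $\C(T)$ therefore gives
\[
w_{ij}:=x_{ij}\,(x_{ij}^*x_{ij})^{-1/2}=e_{ij}\otimes u_{ij},
\]
where $u_{ij}=a_{(i,j)}|a_{(i,j)}|^{-1}$ is a self-adjoint unitary in $\C(a_{(i,j)})$. The products $v_j:=w_{12}w_{23}\cdots w_{j-1,j}=e_{1j}\otimes U_j$, with $U_j$ unitary, then satisfy the hypotheses of Lemma \ref{L:map}. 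Conditions \ref{Con:map1} and \ref{Con:map2} hold because $v_jv_j^*=e_{11}\otimes 1$ and the source projections are the $e_{jj}\otimes 1$. Lemma \ref{L:map} then identifies $\C(T)$ with $\M_k$ of its $(1,1)$-corner. After conjugating by the unitary $W=\operatorname{diag}(1,U_2,\dots,U_k)$, which is an automorphism of $\M_k(A)$, we may assume $v_j=e_{1j}\otimes 1$. It therefore suffices to show that the corner $\C(T)\cap(e_{11}\otimes A)$ is all of $e_{11}\otimes A$.

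\textbf{Step 3 (the corner).} The corner contains the elements $v_i x_{ij} v_j^*=e_{11}\otimes U_i a_{(i,j)}U_j^*$. It also contains $e_{11}\otimes U_i a_{(i,i)}U_i^*$, and the squares $a_{(i,j)}^2$. I would proceed by induction along the superdiagonal. The corner contains $a_{(1,1)}$ and $|a_{(1,2)}|=a_{(1,2)}u_{12}$. I would then try to show that it also contains $u_{12}=U_2$ itself, and hence $a_{(1,2)}$. Once $U_2,\dots,U_j$ are known to lie in the corner, each twisted entry $U_ia_{(i,j)}U_j^*$ can be untwisted. The corner then contains all the $a_m$, and hence equals $A$ because the $a_m$ generate $A$.

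\textbf{Main obstacle.} The step I expect to be hardest is extracting the sign unitaries $u_{ij}$, equivalently the $a_{(i,j)}$ rather than their absolute values, inside the corner. My first attempt would be to use the disjointness of the spectra once more. Compress a suitable combination of $T$ and the $w_{ij}$ to a single row, producing an element that is upper triangular with respect to $e_{11}\otimes 1$ and a complementary projection. One diagonal block of this element is $a_{(1,1)}$ and the other is a twisted copy of $a_{(i,j)}$, and these blocks have disjoint spectra. Lemma \ref{L:upT}, or the Riesz functional calculus, can then separate them and produce $e_{1j}\otimes a_{(1,j)}$ directly. If that fails, the fallback is to arrange the $a_m$ in advance to have positive spectra, which removes the signs. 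This should be harmless for the applications, since one can add large scalars to the generators while keeping the spectra disjoint.
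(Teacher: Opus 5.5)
The paper gives no proof of this lemma; it quotes it from Olsen and Zame, so your argument has to stand on its own. Steps 1 and 2 are sound. One small imprecision: what you actually need in place of Lemma \ref{L:map} is the standard fact that a unital C*-algebra containing a full system of $k\times k$ matrix units is $\M_k$ of its $(1,1)$-corner, since Lemma \ref{L:map} requires a subalgebra $D$ satisfying (c) and (d), which you never specify.

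The genuine gap is Step 3, and it cannot be closed, because the statement as reproduced here is false. Your own reduction shows why. After conjugating by $W$ one has $W\,\C(T)\,W^*=\M_k(B_0)$ with $B_0=\C(\{U_ia_{(i,j)}U_j^*\colon i\le j\})$, and $B_0$ can be a proper subalgebra of $A$. Take $k=2$, $A=\mathbb C^2$, $a_1=2\cdot 1$, $a_2=(1,-1)$, $a_3=3\cdot 1$. These are invertible and self-adjoint, have pairwise disjoint spectra $\{2\}$, $\{\pm1\}$, $\{3\}$, and generate $A$. Under $\M_2(\mathbb C^2)\cong\M_2\oplus\M_2$,
\[
T=\left(\begin{bmatrix}2&1\\0&3\end{bmatrix},\begin{bmatrix}2&-1\\0&3\end{bmatrix}\right)=(T_1,VT_1V^*),\qquad V=\operatorname{diag}(1,-1),
\]
so $\C(T)$ lies in the four-dimensional subalgebra $\{(X,VXV^*)\colon X\in\M_2\}\cong\M_2$. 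In your notation $u_{12}=a_2$ and $B_0=\C(a_1,|a_2|,u_{12}a_3u_{12})=\mathbb C1$. In particular $e_{11}\otimes u_{12}=(e_{11},-e_{11})\notin\C(T)$.

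Your ``first attempt'' also aims at the wrong target. The element $e_{1j}\otimes a_{(1,j)}=x_{1j}$ is already in $\C(T)$ by Step 1. What is missing is an untwisted partial isometry $e_{j1}\otimes 1$ to carry $a_{(1,j)}$ into the corner, and the example shows that no spectral argument can produce one.

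Your fallback is the correct repair, and with it the proof is finished at Step 2. Suppose the superdiagonal entries $a_{(i,i+1)}$ are positive and invertible (for instance, all $a_m$ positive). Then $u_{i,i+1}=1$, so $w_{i,i+1}=e_{i,i+1}\otimes 1$ and all matrix units $e_{ij}\otimes 1$ lie in $\C(T)$. Hence $e_{11}\otimes a_{(i,j)}=(e_{1i}\otimes 1)\,x_{ij}\,(e_{j1}\otimes 1)\in\C(T)$ for every $i\le j$, and the corner is all of $A$.

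The strengthened hypothesis costs nothing where the paper uses the lemma. In Lemma \ref{L:OZcor} the $a_m$ are built by functional calculus from the self-adjoint elements of $S$, and they can be taken positive by shifting each by a large, distinct scalar multiple of the unit. Still, this is a change of hypothesis forced by a counterexample, not a convenience: as a proof of the statement as written, Step 3 has a gap that cannot be filled.
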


\begin{lem}\label{L:OZcor}
	Let $B$, $D$, and $\{v_k\}_{1\leq k \leq n}$ be as in Lemma \ref{L:map}. Let $m$ be a positive integer such that $n > 2m-1$, and let $\{d_1,\dotsc,d_m\} \subseteq D\setminus \{0\}$ be a subset of self-adjoint elements. Then, there exists an invertible element $\mathfrak g \in B$ such that $d_1,\dotsc,d_m \in \C(\mathfrak g)$. 
\end{lem}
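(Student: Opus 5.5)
The plan is to build $\mathfrak g$ as $\Phi(T)$ for a suitable upper triangular matrix $T\in \M_n(pDp)$, where $p:=v_1v_1^*=v_1$ and $\Phi$ is the $*$-isomorphism of Lemma \ref{L:map}. Set $q_k:=v_k^*v_k$, so that $q_1=p$ and $(q_k)_{1\le k\le n}$ is a partition of unity commuting with $D$. By \eqref{E:map} and \eqref{E:map2}, each $d_j$ corresponds under $\Phi^{-1}$ to the diagonal matrix
\[
\mathrm{diag}\bigl(d_j^{(1)},\dotsc,d_j^{(n)}\bigr),\qquad d_j^{(k)}:=v_kd_jv_k^*\in pDp .
\]
Each $d_j^{(k)}$ is self-adjoint. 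Hence it suffices to find an invertible $\mathfrak g$ with $v_1,\dotsc,v_n\in \C(\mathfrak g)$ and $d_j^{(k)}\in \C(\mathfrak g)$ for all $j,k$, since then $d_j=\sum_k v_k^*d_j^{(k)}v_k\in \C(\mathfrak g)$. In total there are $nm$ self-adjoint elements of $pDp$ to encode.

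\emph{Step 1 (shape of $T$).} Choose distinct positive scalars $\lambda_1,\dotsc,\lambda_n$ and a small $\varepsilon>0$. Take the diagonal entries to be $T_{k,k}=\lambda_kp+\varepsilon e_k$ with $e_k=e_k^*\in pDp$. If $\varepsilon\max\|e_k\|$ is small compared with the gaps between the $\lambda_k$ and with $\min_k\lambda_k$, then the spectra of the $T_{k,k}$ are pairwise disjoint and avoid $0$. Put $T_{1,k}=p$ for $2\le k\le n$. Use the remaining strictly upper slots $T_{k,l}$ with $2\le k<l$ as $x_{k,l}+iy_{k,l}$ with $x_{k,l},y_{k,l}$ self-adjoint in $pDp$. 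The available self-adjoint ``slots'' number $n+(n-1)(n-2)$. Using $n\ge 2m$, I will check that this is at least $nm$; the extreme case $n=2m$ gives $2m+(2m-1)(2m-2)\ge 2m^2$. Fill these slots with the $d_j^{(k)}$, padding with $0$ if slots are left over. Then $\mathfrak g:=\Phi(T)$ is upper triangular with respect to $(q_k)$ with invertible diagonal corners, so it is invertible.

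\emph{Step 2 (recovering the data).} Apply Lemma \ref{L:upT} with the finite sequence $p_k=q_k$ (set $p_k=0$ beyond $n$, or simply use its finite-sum proof). Conditions \ref{Con:upT} and \ref{Con:trace} hold trivially, and \ref{Con:upTspec} and \ref{Con:upTspec2} hold by the choice of scalars, since $\sigma(q_k\mathfrak g q_k)\setminus\{0\}$ is $\sigma(T_{k,k})$ computed in $pBp$. This gives $q_k\in\C(\mathfrak g)$, hence every corner $q_k\mathfrak g q_l=v_k^*T_{k,l}v_l\in\C(\mathfrak g)$. From the first row, $q_1\mathfrak g q_k=pv_k=v_k$, so all $v_k\in \C(\mathfrak g)$. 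Conjugating back, $T_{k,l}=v_k(q_k\mathfrak g q_l)v_l^*\in\C(\mathfrak g)$. From the diagonal corners we get $e_k=\varepsilon^{-1}(T_{k,k}-\lambda_kp)$, with $p=v_1\in\C(\mathfrak g)$. From the off-diagonal corners we get the real and imaginary parts $x_{k,l}$ and $y_{k,l}$. Thus every $d_j^{(k)}$ lies in $\C(\mathfrak g)$, and consequently every $d_j$ does.

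The main obstacle I expect is the bookkeeping in Step 1. One must verify the slot count $n+(n-1)(n-2)\ge nm$ under the hypothesis $n>2m-1$, including the small cases. One must also make sure that scaling by $\varepsilon$ keeps the diagonal spectra disjoint and away from $0$, which is where the hypothesis $d_j\neq 0$ and the normalization matter. If the count turns out to be too tight, my fallback is to pack two or more of the self-adjoint elements into a single diagonal slot. To do this I would replace $e_k$ by an element such as $\varepsilon e+\varepsilon^2 e'$, and the argument would then require care, since $pDp$ need not be commutative. A cleaner alternative fallback is to invoke the Olsen--Zame lemma on a $2\times 2$ block decomposition obtained by grouping the $q_k$ into two halves.
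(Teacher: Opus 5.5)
Your proof is correct, but it takes a different route from the paper. The paper never writes down the triangular element explicitly. It gathers the compressed elements $(v_1v_1^*)d_{i,j}(v_1v_1^*)$ together with $p=v_1v_1^*$ into a set $S$ of at most $nm+1\le n(n+1)/2$ self-adjoint elements, and uses the functional calculus to replace them by $n(n+1)/2$ invertible self-adjoint generators of $\C(S)$ with pairwise disjoint spectra. It then applies the Olsen--Zame lemma to get an upper-triangular generator $g$ of $\M_n(\C(S))$ and transports it through $\Phi$.

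You instead prove by hand the special case you need. Since the partial isometries $v_k$ are already available, setting the first-row entries equal to $p$ lets you read off $v_k=q_1\mathfrak g q_k$ once the $q_k$ have been extracted by the finite form of Lemma~\ref{L:upT}. As a result, only the diagonal entries need the disjoint-spectra normalization, which your $\lambda_k p+\varepsilon e_k$ supplies. The remaining off-diagonal entries can be arbitrary elements of $pDp$, each carrying two self-adjoint elements through its real and imaginary parts.

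The paper's version is shorter because it cites Olsen--Zame as a black box. Yours is self-contained and makes the generator fully explicit. It uses only Lemma~\ref{L:map} and the finite version of Lemma~\ref{L:upT}, which the paper itself invokes in Lemma~\ref{L:preMain}.

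Two small corrections. First, padding with $p_k=0$ is not permitted by Lemma~\ref{L:upT} as stated, since the projections there must be nonzero, so rely on the finite-sum argument. Second, check your count $n^2-2n+2\ge nm$ for every $n\ge 2m$, not only at $n=2m$. This is immediate: $n^2-(m+2)n+2=n(n-m-2)+2\ge 0$ when $m\ge 2$, and it equals $(n-1)(n-2)\ge 0$ when $m=1$. Neither the hypothesis $d_j\neq 0$ nor either of your fallbacks plays any role.
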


\begin{proof}
	As in Equation \eqref{E:map} and Equation \eqref{E:map2}, we can write $d_i \in \{d_1,\dotsc,d_m\}$ as 
\begin{align*}
	d_i = \sum_{j=1}^nv_j^*(v_1v_1^*)d_{i,j}(v_1v_1^*)v_j, \quad d_{i,j} = v_jd_iv_j^* \in  D. 
\end{align*}
Consider the self-adjoint elements $(v_1v_1^*)d_{i,j}(v_1v_1^*) \in (v_1v_1^*) D(v_1v_1^*)$ for $1 \leq i \leq m$ and  $1\leq j \leq n$. Suppose the distinct nonzero such elements constitute a set $S'$, and let $S = S' \cup \{v_1v_1^*\}$. Denoting the cardinality of $S$ by $N$, $\C(S)$ is a unital C*-algebra generated by $N \leq nm+1 \leq n(n+1)/2$ self-adjoint elements; it is then an simple consequence of the continuous function calculus that $\C(S)$ is generated by $n(n+1)/2$ invertible self-adjoint elements with disjoint spectra, say $a_1,\dotsc,a_{n(n+1)/2}$. It follows from Olsen and Zame that $\M_n(\C(S))$ is generated by an element $g$ of the form 
\begin{align*}
	g = \begin{bmatrix}
		a_1&a_2&\cdots&a_n\\
		0&a_{n+1}&\cdots&a_{2n-1}\\
		\vdots&\ddots&\ddots&\vdots\\
		0&\cdots&0&a_{n(n+1)/2}
	\end{bmatrix}; 
\end{align*}
notice $g$ is invertible since its diagonal entries are (see the paragraph preceding Lemma \ref{L:upT}). 

Now, consider the map $\Phi\colon \M_n((v_1v_1^*)  D (v_1 v_1^*)) \to \C( D,v_1,\dotsc,v_n)$ from Lemma \ref{L:map}. Clearly $\Phi(\C(g)) = \C(\Phi(g))$, and $\Phi(g)$ is invertible. Fix $1 \leq i \leq m$; since $\C(g)$ contains the element $[a_{k,l}]$, where $a_{j,j} = (v_1v_1^*)d_{i,j}(v_1v_1^*)$ for $1 \leq j \leq n$ and $a_{k,l} = 0$ otherwise, $\Phi(\C(g))$ contains the element $\Phi([a_{k,l}]) = d_{i}$. Writing $\mathfrak g = \Phi(g)$, the result follows. 
\end{proof}

\subsection{Lemmas Pertaining Specifically to Algebras with AF-actions}

	Let $B = B(A,D)$ have an AF-action, and let $(\bigoplus_{1\leq j\leq K_i} \M_{n_{i,j}},\phi_i)_{i\in \mathbb N}$ be the associated decomposition of $A$. Fix $i_0 \in \mathbb N$, and denote the multiplicity of the embedding of $\M_{n_{i_0,j'}}$ into $\M_{n_{i,j}}$ via $\phi_{i_0,i}$ by $m_{i_0,i;j',j}$, where $i> i_0$, $1\leq j' \leq K_{i_0}$, and $1\leq j \leq K_{i}$. For a positive integer $N$, note that one can always find an $i > i_0$ such that $m_{i_0,i;j',j}>N$ for each $1\leq j' \leq K_{i_0}$ and $1 \leq j \leq K_{i}$; this is a simple consequence of the fact that $A$ is simple. 


\begin{lem}\label{L:preMain}
	Let $B=B(A,D)$ have an AF-action, and let $(\bigoplus_{1\leq j\leq K_i} \M_{n_{i,j}},\phi_i)_{i\in \mathbb N}$ be the associated decomposition of $A$; denote the set of canonical matrix units for $\M_{n_{i,j}}$ by $V_{i,j}$ and the subset of $V_{i,j}$ consisting of all self-adjoint elements by $E_{i,j}$. Let $p \in E_{i',j'}$ for some $i' \in \mathbb N$ and some $1 \leq j' \leq K_{i'}$, and let $\{d_1,\dotsc,d_m\}\subseteq pDp$ be a subset of self-adjoint elements. Then, there exists an invertible element $\mathfrak g \in pBp$ such that $d_1,\dotsc,d_m \in \C(\mathfrak g)$. 
\end{lem}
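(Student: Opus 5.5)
The plan is to apply Lemma \ref{L:OZcor} inside the corner $pBp$, taking $pDp$ as the subalgebra $D$ and suitable matrix units of a later stage of $A$ as the partial isometries. Put $N := 2m$. By simplicity of $A$ (the remark preceding this lemma), choose $i > i'$ such that the multiplicity $m_{i',i;j',j}$ exceeds $N$ for every $1\leq j\leq K_i$. Under $\phi_{i',i}$, the projection $p\in E_{i',j'}$ becomes a sum of diagonal matrix units of the stage-$i$ blocks:
\begin{align*}
p=\sum_{j=1}^{K_i}\sum_{s=1}^{m_{i',i;j',j}} e^{(j)}_{s},
\end{align*}
where each $e^{(j)}_s\in E_{i,j}$ is a minimal diagonal projection of $\M_{n_{i,j}}$.

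The obstacle is that the lemmas of the previous subsection require a single family $v_1,\dots,v_n$ whose ranges all equal one projection $v_1$. The minimal projections of different blocks $j$ are not equivalent, so $p$ is not a multiple of one projection. I would handle this in two ways. First, note that the hypotheses of Lemma \ref{L:map} make sense for a nonunital unit, so I can work block by block. Let $p_j:=\sum_s e^{(j)}_s$. Within block $j$, take $v^{(j)}_s := e^{(j)}_{1,s}$, the matrix unit in $V_{i,j}$ from $e^{(j)}_s$ to $e^{(j)}_1$. These satisfy conditions \ref{Con:map1}--\ref{Con:map4} in $p_jBp_j$ with subalgebra $p_jDp_j$:
\begin{enumerate}
\item[(a)] holds because $v^{(j)}_1=e^{(j)}_1$;
\item[(b)] holds because the sources $e^{(j)}_s$ sum to $p_j$;
\item[(c)] holds because the $e^{(j)}_s$ lie in $D_0$ and hence commute with $D$;
\item[(d)] holds because $v\,p_jdp_j\,v^*\in D$ by Condition \ref{Con:AF3}, and this element equals $e^{(j)}_1(\cdot)e^{(j)}_1$, which lies in $p_jDp_j$ after compression.
\end{enumerate}
Also $p_j$ commutes with $D$, so $p_jDp_j$ is a unital C*-subalgebra of $p_jBp_j$. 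Since $d_k = \sum_j p_jd_kp_j$ with each $p_jd_kp_j$ self-adjoint in $p_jDp_j$, Lemma \ref{L:OZcor} (with $n=m_{i',i;j',j}>2m-1$, after discarding any zero $p_jd_kp_j$) gives invertibles $\mathfrak g_j\in p_jBp_j$ with $p_jd_kp_j\in \C(\mathfrak g_j)$.

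The remaining and harder step is to glue these into one generator. Set $\mathfrak g := \sum_j \mathfrak g_j$, which is invertible in $pBp$. I would arrange, by rescaling and translating the diagonal entries $a_l$ in the Olsen--Zame construction inside Lemma \ref{L:OZcor}, that the spectra $\sigma(\mathfrak g_j)$ are pairwise disjoint for different $j$. For instance, one can place them in disjoint discs avoiding $0$, which the functional calculus allows when choosing the invertible self-adjoint generators. Then holomorphic functional calculus on $\mathfrak g$, using a function equal to $1$ near $\sigma(\mathfrak g_j)$ and $0$ near the other spectra, recovers each $p_j$, hence each $\mathfrak g_j=p_j\mathfrak g$. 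So $\C(\mathfrak g)$ contains every $p_jd_kp_j$, and therefore contains $d_k$.

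If controlling the spectra proves awkward, the alternative is to first show $p_j\in \C(\mathfrak g)$ through a triangular argument like Lemma \ref{L:upT}, with the $p_j$ playing the role of the $p_i$. Either way, the spectral separation across blocks is the main point to verify carefully.
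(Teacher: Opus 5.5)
Your proposal is correct and follows the paper's proof essentially step by step. Like the paper, you pass to a stage where every multiplicity exceeds $2m-1$, apply Lemma \ref{L:OZcor} block by block in $p_jBp_j$ with $p_jDp_j$ and the matrix units $e^{(j)}_{1,s}$, and then glue $\mathfrak g=\sum_j\mathfrak g_j$ after making the spectra disjoint. The only cosmetic difference is that you recover each $p_j$ directly as a Riesz projection via the holomorphic functional calculus, whereas the paper cites Lemma \ref{L:upT}, which rests on the same fact.
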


\begin{proof}
Let $i$ be such that $m_{i',i;j',j}>2m-1$ for each $1 \leq j \leq K_{i}$, and write $m_{i',i;j',j} = M_{j}$ for convenience. For each $1 \leq j \leq K_i$, there is a subset $\{v_{j,k}\}_{1\leq k \leq M_j} \subseteq V_{i,j}$ of cardinality $M_j$ such that $v_{j,1}$ is the range projection of each $v_{j,k}$ and $p  =\sum_{1 \leq j \leq K_i} \sum_{1 \leq k \leq M_j} (v_{j,k}^*v_{j,k})$; writing $p_j = \sum_{1 \leq k \leq M_j} (v_{j,k}^*v_{j,k})$, it follows that the source projections of the members of the set $\{v_{j,k}\}_{1\leq k \leq M_j}$ form a partition of unity for $p_jBp_j$. Moreover, notice $v_{j,k}$ and $v_{j,k}^*$ commute with $p_j$ so that $v_{j,k}p_jdp_jv_{j,k}^* = p_{j}v_{j,k}dv_{j,k}^*p_j \in p_j Dp_j$ for any $1 \leq k \leq M_j$ and $d \in D$; also, the source projections of the members of the set $\{v_{j,k}\}_{1\leq k \leq M_j}$ are contained in $D_0$ so that $(v_{j,k}^*v_{j,k})p_jdp_j = p_jdp_j(v_{j,k}^*v_{j,k})$ for any $d \in D$. We conclude that for each $1 \leq j \leq K_i$, $p_jBp_j$ is a unital C*-algebra containing a unital C*-subalgebra $p_jDp_j$ and a finite set $\{v_{j,k}\}_{1\leq k \leq M_j}$ of nonzero partial isometries such that 
\begin{enumerate}[label=(\alph*)]
	\item $v_{j,1} = v_{j,k}v_{j,k}^*$,
	\item $(v_{j,k}^*v_{j,k})(v_{j,k'}^*v_{j,k'}) = 0$ for $k \not = k'$ and $\sum_{1\leq k \leq M_j} (v_{j,k}^*v_{j,k}) = p_j$,
	\item $[v_{j,k}^*v_{j,k},d'] = 0$ for any $d' \in p_jDp_j$,
	\item $v_{j,k}d'v_{j,k}^* \in p_j D p_j$ for any $d' \in p_jDp_j$. 
\end{enumerate}

For each $1 \leq j \leq K_i$, consider the self-adjoint elements $p_jd_1p_j,\dotsc,p_jd_mp_j \in p_jDp_j$; take the distinct nonzero such elements and form a set $S_j \subseteq p_jDp_j \setminus \{0\}$. Then, $|S_j|$ (the cardinality of $S_j$) is a positive integer such that $M_j > 2m-1 \geq 2|S_j|-1$. By Lemma \ref{L:OZcor}, there exists an invertible element $g_j \in p_jBp_j$ such that $p_jd_1p_j,\dotsc,p_jd_mp_j \in\C(g_j)$ for each $1 \leq j \leq K_i$. 

Assuming $\sigma(g_j) \cap \sigma(g_{j'}) = \text{\O}$ for $j \not = j'$ (which we may by the functional calculus), we claim that the C*-algebra generated by $\mathfrak g = \sum_{1\leq j \leq K_i} g_j \in pBp$ contains $d_1,\dotsc,d_m$. Indeed, $\mathfrak g$ is ``diagonal'' with respect to the sequence $(p_j)_{1\leq j \leq K_i}$, and it is a simple corollary of Lemma \ref{L:upT} that  $(p_j)_{1\leq j \leq K_i}\subseteq \C(\mathfrak g)$; hence $g_j \in \C(\mathfrak g)$, hence $p_jd_1p_j,\dotsc,p_jd_mp_j \in \C(\mathfrak g)$, for each $1 \leq j \leq K_i$. But, notice $d_l = \sum_{1\leq j \leq K_i} p_jd_lp_j$ for each $1 \leq l \leq m$ (since $D$ and $D_0$ commute). The result follows. 
\end{proof}

\begin{lem}\label{L:preMain2}
	Let $B(A,D)$, $(\bigoplus_{1\leq j\leq K_i} \M_{n_{i,j}},\phi_i)_{i\in \mathbb N}$, $V_{i,j}$, and $E_{i,j}$ be as in Lemma \ref{L:preMain}. Fix $i \in \mathbb N$. For each $1 \leq j \leq K_i$, let $p_j \in E_{i,j}$ and $\{v_{j,k}\}_{1\leq k \leq n_{i,j}} \subseteq V_{i,j}$ be a subset of cardinality $n_{i,j}$ such that $v_{j,1}= p_j$ is the range projection of each $v_{j,k}$. Then, given a self-adjoint element $d \in D$, there exists a subset $G = \{g_j\}_{1\leq j \leq K_i} \subseteq B$ such that 
\begin{enumerate}
\item $g_j \in p_jBp_j$,
\item $0 \not \in \sigma(g_j)$,
\item $d \in \C(\{v_{1,k}\}_{1\leq k \leq n_{i,1}},\dotsc,\{v_{K_i,k}\}_{1\leq k \leq n_{i,K_i}},G)$. 
\end{enumerate} 
\end{lem}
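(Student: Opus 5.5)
Fix a self-adjoint $d\in D$. The plan is to compress $d$ into the corners $p_jBp_j$ using the partial isometries $v_{j,k}$, generate those compressions there with Lemma \ref{L:preMain}, and then rebuild $d$. First, since $A$ is unital, the matrix units $v_{j,k}^*v_{j,k}$ for $1\leq j\leq K_i$ and $1\leq k\leq n_{i,j}$ are exactly the diagonal matrix units of $\bigoplus_j \M_{n_{i,j}}$. They are therefore mutually orthogonal projections in $E_{i,j}\subseteq D_0$ summing to $1$. By Condition \ref{Con:AF2} of Definition \ref{D:AF} they commute with $d$, so
\begin{align*}
d=\sum_{j=1}^{K_i}\sum_{k=1}^{n_{i,j}} (v_{j,k}^*v_{j,k})\,d\,(v_{j,k}^*v_{j,k}) = \sum_{j=1}^{K_i}\sum_{k=1}^{n_{i,j}} v_{j,k}^*\,\big(p_j d_{j,k} p_j\big)\,v_{j,k},
\end{align*}
where $d_{j,k}:=v_{j,k}dv_{j,k}^*\in D$ by Condition \ref{Con:AF3}, and where we used $v_{j,k}=p_jv_{j,k}$. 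Each $p_jd_{j,k}p_j$ is self-adjoint and lies in $p_jDp_j$. Indeed, $p_j\in D_0$ commutes with $D$, so $p_jd_{j,k}p_j=p_jd_{j,k}$, and hence it belongs to $D$.

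Next, fix $j$ and apply Lemma \ref{L:preMain} with $p=p_j\in E_{i,j}$ to the finite family $\{p_jd_{j,k}p_j\}_{1\leq k\leq n_{i,j}}\subseteq p_jDp_j$. The lemma allows zero elements; if it did not, we would simply discard them. This gives an invertible $g_j\in p_jBp_j$, so that $0\notin\sigma(g_j)$ with the spectrum computed in $p_jBp_j$, and we have $p_jd_{j,k}p_j\in\C(g_j)$ for all $k$. Set $G=\{g_j\}$. Conditions (1) and (2) then hold by construction. For condition (3), every $p_jd_{j,k}p_j$ lies in $\C(G)$, so the displayed formula exhibits $d$ as a finite sum of products of elements of $\C(\{v_{j,k}\},G)$.

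The only delicate point is interpreting condition (2). If $\sigma(g_j)$ is meant in $B$ rather than in $p_jBp_j$, then $0$ always belongs to it whenever $p_j\neq 1$, so (2) must be read in the corner $p_jBp_j$. That is exactly the invertibility supplied by Lemma \ref{L:preMain}. Beyond this, the argument is a direct bookkeeping combination of Definition \ref{D:AF} with Lemma \ref{L:preMain}.
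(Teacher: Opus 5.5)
Your proof is correct and follows the paper's argument essentially verbatim. You decompose $d$ along the partition of unity $\{v_{j,k}^*v_{j,k}\}$, rewrite it as $\sum_{j,k} v_{j,k}^*(p_jd_{j,k}p_j)v_{j,k}$, and apply Lemma \ref{L:preMain} in each corner $p_jBp_j$, reading invertibility in the corner exactly as the paper intends. (As an aside, the step where you check $p_jd_{j,k}p_j\in D$ is not needed, since membership in $p_jDp_j$ is automatic; in any case $p_jd_{j,k}p_j=d_{j,k}$ directly because $p_jv_{j,k}=v_{j,k}$.)
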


\begin{proof}
	Notice the source projections of the elements of the set $\{v_{j,k}\}_{1\leq k \leq n_{i,j}}$ exhaust the elements of $E_{i,j}$ for each $1 \leq j \leq K_i$ so that the projections in the set $\bigcup_{1\leq j \leq K_i} \{v_{j,k}^*v_{j,k}\}_{1\leq k \leq n_{i,j}}$ form a partition of unity for $B$. We can then write $d$ as 
\begin{align}\label{E:d}
	d &= \Big ( \sum_{j=1}^{K_{i}}\sum_{k=1}^{n_{i,j}} (v_{j,k}^*v_{j,k}) \Big ) d \Big ( \sum_{j=1}^{K_{i}}\sum_{k=1}^{n_{i,j}} (v_{j,k}^*v_{j,k}) \Big )\\\notag
	&= \sum_{j=1}^{K_{i}}\sum_{k=1}^{n_{i,j}} (v_{j,k}^*v_{j,k}) d (v_{j,k}^*v_{j,k}) \\\notag
	&= \sum_{j=1}^{K_{i}}\sum_{k=1}^{n_{i,j}} v_{j,k}^* (p_{j}v_{j,k})d(v_{j,k}^*p_{j})v_{j,k} \\\notag
	&= \sum_{j=1}^{K_{i}}\sum_{k=1}^{n_{i,j}} v_{j,k}^* (p_{j}d_{j,k}p_{j})v_{j,k}
\end{align}
where $d_{j,k} = v_{j,k}dv_{j,k}^*$. Consider the subset $\{p_{j}d_{j,1}p_{j},\dotsc,p_{j}d_{j,n_{i,j}}p_{j}\} \subseteq p_{j}Dp_{j}$ of self-adjoint elements for each $1 \leq j \leq K_i$; Lemma \ref{L:preMain} implies there exists an invertible element $g_{j} \in p_j B p_j$ such that $p_{j}d_{j,1}p_{j},\dotsc,p_{j}d_{j,n_{i,j}}p_{j} \in \C(g_j)$. Defining $G:=\{g_1,\dotsc,g_{K_i}\}$, the final assertion follows from the last equality in Equation \eqref{E:d}. 
\end{proof}

We remark that the subsets $\{v_{j,k}\}_{1\leq k \leq n_{i,j}} \subseteq V_{i,j}$ from the previous lemma generate the finite-dimensional C*-algebra $\bigoplus_{1\leq j\leq K_i} \M_{n_{i,j}}$; that is, 
\begin{align*}
	\bigoplus_{j=1}^{K_i} \M_{n_{i,j}} = \C\big(\{v_{1,k}\}_{k=1}^{n_{i,1}},\dotsc, \{v_{K_i,k}\}_{k=1}^{n_{i,K_i}}\big)
\end{align*}
For this, it is sufficient to show $V_{i,j} \subseteq \C(\{v_{j,k}\}_{1\leq k \leq n_{i,j}})$ for each $1\leq j \leq K_i$. Indeed, fix $j$, and let $V_{i,j} = \{e_{s,t}\mid 1 \leq s,t \leq n_{i,j}\}$. Then, $e_{t_k,t_k} = v_{j,k}^*v_{j,k}$ for each $1 \leq k \leq n_{i,j}$, where $1 \leq t_k \leq n_{i,j}$ and $t_k \not = t_{k'}$ for $k \not = k'$; thus $v_{j,k} = e_{s_k,t_k}$ for some $1 \leq s_k \leq n_{i,j}$. But, $v_{j,1}$ is the range projection of each $v_{j,k}$ so that $v_{j,1} = e_{s_k,s_k} = e_{s_1,t_1}$; hence $s_1 = t_1 = s_k$ for each $1 \leq k \leq n_{i,j}$ so that $v_{j,k} = e_{s_1,t_k}$. Then, to obtain $e_{s,t}$ for some $1 \leq s,t \leq n_{i,j}$, take the product $v_{j,k}^*v_{j,k'}$ for $k$ and $k'$ such that $t_k = s$ and $t_{k'} = t$. 

\subsection{Main Results}

	Let $B=B(A,D)$ have an AF-action, and let $(\bigoplus_{1\leq j\leq K_i} \M_{n_{i,j}},\phi_i)_{i\in \mathbb N}$ be the associated decomposition of $A$; denote the set of canonical matrix units for $\M_{n_{i,j}}$ by $V_{i,j}$ and the subset of $V_{i,j}$ consisting of all self-adjoint elements by $E_{i,j}$. To prove our main theorem, that every C*-algebra with an AF-action is singly generated, we will construct a generator for the C*-algebra $B$. We now define the sets of elements from which we will build a generator for $B$. 
	
	Let $(s_i)_{i\in \mathbb N}$ be a sequence of natural numbers such that $n_{s_1,j} > 1$ for each $1 \leq j \leq K_{s_1}$ and $m_{s_i,s_{i+1};j',j}>1$ for each $1 \leq j' \leq K_{s_i}$ and $1 \leq j \leq K_{s_{i+1}}$; for convenience, write 
\begin{align*}
	\mathsf K_i = K_{s_i}, \quad \mathsf N_{i,j'} = n_{s_i,j'}, \quad \mathsf M_{i+1;j',j} = m_{s_i,s_{i+1};j',j}, \qquad 1 \leq j' \leq \mathsf K_{i}, \ 1 \leq j \leq \mathsf K_{i+1}; 
\end{align*}
moreover, define $\mathsf K_0 := 1$ and $\mathsf M_{1;j',j} := \mathsf N_{1,j}$ for each $1 \leq j' \leq \mathsf K_0$ and $1 \leq j \leq \mathsf K_1$. 

	Inductively on $i$, construct sets of projections $Q_{i;j',j} = \{q_{i;j',j,k}\}_{1\leq k \leq \mathsf M_{i;j',j}}$ for each $1 \leq j' \leq \mathsf K_{i-1}$ and $1 \leq j\leq \mathsf K_i$ of cardinality $\mathsf M_{i;j',j}$ such that 
\begin{enumerate}
	\item[(Q1)] $Q_{i;j',j} \subseteq E_{s_{i},j}$ for each $1\leq j' \leq \mathsf K_{i-1}$, 
	\item[(Q2)] $p_{i-1,j'} = \sum_{1\leq j \leq \mathsf K_{i}} \sum_{q \in Q_{i;j',j}} q$ for each $1 \leq j' \leq \mathsf K_{i-1}$, where $p_{0,1}$ is the identity and $p_{i-1,j'} = q_{i-1;\mathsf K_{i-2},j',\mathsf M_{i-1;\mathsf K_{i-2},j'}}$ for $i > 1$.
\end{enumerate}
We take $Q_{1;1,j} = E_{s_1,j}$ for $1 \leq j \leq \mathsf K_1$ to start the induction. To each set of projections $Q_{i;j',j}$, associate a set of partial isometries $W_{i;j',j} = \{w_{i;j',j,k}\}_{1\leq k \leq \mathsf M_{i;j',j}}$ of cardinality $\mathsf M_{i;j',j}$ such that 
\begin{enumerate}
	\item[(W1)] $W_{i;j',j} \subseteq V_{s_i,j}$ for each $1 \leq j' \leq \mathsf K_{i-1}$,
	\item[(W2)] the range projection of each member of $W_{i;j',j}$ is $w_{i;1,j,1}$ for $1 \leq j' \leq \mathsf K_{i-1}$,
	\item[(W3)] the source projection of $w_{i;j',j,k}$ is $q_{i;j',j,k}$. 
\end{enumerate}
For each $i \in \mathbb N$ and $1 \leq j \leq \mathsf K_i$, choose additional sets of partial isometries $U_{i,j} = \{v_{i;j,k}\}_{1\leq k \leq \mathsf N_{i,j}}$ of cardinality $\mathsf N_{i,j}$ such that 
\begin{enumerate}
\item[(U1)] $W_{i;j',j} \subseteq U_{i,j} \subseteq V_{s_i,j}$ for each $1 \leq j' \leq \mathsf K_{i-1}$, 
\item[(U2)] $v_{i;j,1} = w_{i;1,j,1}$ is the range projection of each member of $U_{i,j}$. 
\end{enumerate}

	We then have the following lemma. 

\begin{lem}\label{L:partialIso}
	For each $i \in \mathbb N$ and $1 \leq j \leq \mathsf K_{i+1}$, one has 
	\begin{align*}
		U_{i+1,j} \subseteq \C\big( \bigoplus_{j=1}^{\mathsf K_i} \M_{\mathsf N_{i,j}}, W_{i+1;1,j},\dotsc,W_{i+1;\mathsf K_i,j}\big). 
	\end{align*}
\end{lem}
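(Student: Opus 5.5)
Here $\bigoplus_{j'}\M_{\mathsf N_{i,j'}}$ means the finite-dimensional algebra $A_{s_i}=\bigoplus_{j'}\M_{n_{s_i,j'}}$, viewed inside $A_{s_{i+1}}$ via $\phi_{s_i,s_{i+1}}$. Let $\mathcal C$ be the C*-algebra on the right-hand side. The plan is to show that every $v\in U_{i+1,j}$ can be written as a product $w\,u$, where $w\in W_{i+1;j',j}$ for some $j'$ and $u\in A_{s_{i+1}}$ is the image of an element of $A_{s_i}$.

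First, by (Q2), (Q1) and (U1)--(U2), the projections $q_{i+1;j',j,k}$ for $1\le j'\le\mathsf K_i$ and $1\le k\le \mathsf M_{i+1;j',j}$ lie in $E_{s_{i+1},j}$. Summed over $k$ and $j$, they decompose the projection $p_{i,j'}$, which is a diagonal matrix unit of $\M_{\mathsf N_{i,j'}}$. Because the embedding is canonical, $p_{i,j'}$ maps to a sum of diagonal matrix units of the summands $\M_{n_{s_{i+1},j}}$, exactly $\mathsf M_{i+1;j',j}$ of them in the $j$-th summand. So the $q$'s with fixed $j'$ and $j$ are precisely the diagonal units under $p_{i,j'}$ in block $j$. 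The key observation is that every diagonal unit $e\in E_{s_{i+1},j}$ lies under $\phi_{s_i,s_{i+1}}(e')$ for a unique diagonal unit $e'$ of some $\M_{\mathsf N_{i,j'}}$. Moreover, there is a canonical matrix unit $u'\in V_{s_i,j'}$ with $u'^*u'=e'$ and $u'u'^*=p_{i,j'}$. For a canonical (block-diagonal, multiplicity) embedding, the image $\phi(u')$ is a sum of matrix units $\sum_r e_{\sigma(r),\tau(r)}$, where $\tau$ runs through the copies below $e'$ and $\sigma$ through the corresponding copies below $p_{i,j'}$, in matching order. Consequently there is a $q=q_{i+1;j',j,k}$, namely the copy of $p_{i,j'}$ in the same multiplicity slot as $e$, such that $q\,\phi(u')\,e$ is the matrix unit from $e$ to $q$, i.e.\ the unit $e_{q,e}$ of $\M_{n_{s_{i+1},j}}$.

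Now take $v\in U_{i+1,j}$. Its range is $v_{i+1;j,1}=w_{i+1;1,j,1}$, and its source is some diagonal unit $e$. Choose $j'$, $u'$ and $q=q_{i+1;j',j,k}$ as above, and set $w=w_{i+1;j',j,k}\in W_{i+1;j',j}$. By (W2)--(W3), $w$ has range $v_{i+1;j,1}$ and source $q$. Then $w\,\phi(u')\,e$ is a partial isometry in $\M_{n_{s_{i+1},j}}$ built from canonical matrix units, with range $v_{i+1;j,1}$ and source $e$. Since canonical matrix units are determined by their range and source, it equals $v$. It remains to see that $e\in\mathcal C$. We have $e=\phi(u')^*q\,\phi(u')$, because $q\phi(u')=q\phi(u')e$ by the slot matching, and $q=w^*w\in\mathcal C$. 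Hence $v=w\,\phi(u')\,\phi(u')^*w^*w\,\phi(u')\in\mathcal C$.

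The main obstacle is making the slot-matching claim precise. One needs a convention that the canonical decomposition has connecting maps of the standard block-diagonal form, so that the image of a canonical matrix unit is a sum of canonical matrix units. This is the meaning of ``canonical inductive limit decomposition''. Once that is fixed, the rest is bookkeeping with the listed properties (Q1), (Q2), (W1)--(W3), (U1) and (U2).
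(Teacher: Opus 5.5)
Your proposal is correct and is essentially the paper's argument: the paper also writes each $v\in U_{i+1,j}$ as $w\,u'$ with $w\in W_{i+1;j',j}$ and $u'$ the matrix unit of $\M_{\mathsf N_{i,j'}}$ from $e'$ to $p_{i,j'}$ (its identity $f_{j;I_j,L_{j',j,J_{j'},t}}\,e_{j';J_{j'},k}=f_{j;I_j,L_{j',j,k,t}}$), using explicit index bookkeeping where you reason via ranges and sources. Your detour showing $e\in\mathcal C$ is unnecessary but harmless, since $w\,\phi(u')=w\,q\,\phi(u')=w\,q\,\phi(u')\,e=v$ already.
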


\begin{proof}
	Let $V_{s_i,j'} = \{e_{j';l,k}\mid 1 \leq l,k \leq \mathsf N_{i,j'}\}$ for $1 \leq j' \leq \mathsf K_i$ and $V_{s_{i+1},j} = \{f_{j;l,k}\mid 1 \leq l,k\leq \mathsf N_{i+1,j}\}$ for $1\leq j \leq \mathsf K_{i+1}$. Then for each $j$ there exist positive integers $L_{j',j,k,t}$ for $1 \leq j' \leq \mathsf K_i$, $1 \leq k \leq \mathsf N_{i,j'}$, and $1 \leq t \leq \mathsf M_{i+1;j',j}$ such that 
\begin{enumerate}[label=(\alph*)]
	\item $1 \leq L_{j',j,k,t} \leq \mathsf N_{i+1,j}$,\label{prop1}
	\item $L_{j_1',j,k_1,t_1} \not = L_{j_2',j,k_2,t_2}$ for $j_1'\not = j_2'$ or $k_1\not = k_2$ or $t_1\not = t_2$,\label{prop2}
	\item $e_{j';l,k} = \sum\limits_{j=1}^{\mathsf K_{i+1}} \sum\limits_{t=1}^{\mathsf M_{i+1;j',j}} \label{prop3} f_{j;L_{j',j,l,t},L_{j',j,k,t}}$. 
\end{enumerate}
Thus, since for some $1 \leq J_{j'} \leq \mathsf N_{i,j'}$ we have $q_{i;\mathsf K_{i-1},j',\mathsf M_{i;\mathsf K_{i-1},j'}} = e_{j';J_{j'},J_{j'}}$, it follows from Condition \ref{prop3} that $Q_{i+1;j',j} =  \{ f_{j;L_{j',j,J_{j'},t},L_{j',j,J_{j'},t}}\mid 1 \leq t \leq \mathsf M_{i+1;j',j}  \}$ for $1 \leq j' \leq \mathsf K_{i}$; hence, for some $I_j \in \{L_{j',j,J_{j'},t} \mid 1 \leq j' \leq \mathsf K_{i},\, 1 \leq t \leq \mathsf M_{i+1;j',j}\}$, we have $W_{i+1;j',j} = \{f_{j;I_j,L_{j',j,J_{j'},t}} \mid 1 \leq t \leq \mathsf M_{i+1;j',j}\}$ and $U_{i+1,j} = \{f_{j;I_j,k} \mid 1 \leq k \leq \mathsf N_{i+1,j}\}$. But Conditions \ref{prop1} and \ref{prop2} imply that the set of numbers $\{L_{j',j,k,t}\mid 1 \leq j' \leq \mathsf K_i,\, 1 \leq k \leq \mathsf N_{i,j'},\, 1 \leq t \leq \mathsf M_{i+1;j',j}\}$ exhausts the integers in the interval $[1,\mathsf N_{i+1,j}]$. Hence, we can rewrite $U_{i+1,j}$ as 
\begin{align*}
	U_{i+1,j} = \{f_{j;I_j,L_{j',j,k,t}}\mid 1 \leq j' \leq \mathsf K_i,\, 1 \leq k \leq \mathsf N_{i,j'},\, 1 \leq t \leq \mathsf M_{i+1;j',j}\}.
\end{align*}
Finally, since $f_{j;I_j,L_{j',j,J_{j'},t}}e_{j';J_{j'},k} = f_{j; I_j,L_{j',j,k,t}}$ for $1 \leq j' \leq \mathsf K_i$, $1 \leq k \leq \mathsf N_{i,j'}$, and $1 \leq t \leq \mathsf M_{i+1;j',j}$, the result follows. 
\end{proof}

	Let $\{d_1,d_2,\dots\}$ be a subset of self-adjoint generators for $D$. Then, for each $i \in \mathbb N$, by Lemma \ref{L:preMain2}, there exists a subset $G_i = \{g_{i,j}\}_{1\leq j \leq \mathsf K_i} \subseteq B$ such that 
\begin{enumerate}[label=(G\arabic*),ref=G\arabic*]
\item $g_{i,j} \in w_{i;1,j,1}Bw_{i;1,j,1}$, \label{Con:G1}
\item $0 \not \in \sigma(g_{i,j})$, \label{Con:G2}
\item $d_i \in \C(U_{i,1},\dotsc,U_{i,\mathsf K_i},G_i)$; \label{Con:G3}
\end{enumerate}
moreover, we may assume (using the functional calculus)
\begin{enumerate}[label=(G\arabic*), resume, ref=G\arabic*]
\item $\sigma(g_{i,j}) \cap \sigma(g_{i',j'}) = \text{\O}$ for $i \not = i'$ or $j \not = j'$,\label{Con:G4}
\item $\|g_{i,j}\|\leq 2^{-i-j-2}$. \label{Con:G5}
\end{enumerate}
Also, let $\Lambda = \{\lambda_{i;j',j,k} \mid i \in \mathbb N,\, 1 \leq j' \leq \mathsf K_{i-1},\, 1 \leq j \leq \mathsf K_i,\, 1 \leq k \leq \mathsf M_{i;j',j}\}$ be a set of mutually different positive real numbers such that $\Lambda \cap (\bigcup_{i\in \mathbb N} \bigcup_{1\leq j \leq \mathsf K_i}\sigma (g_{i,j})) = \text{\O}$ and 
\begin{align*}
\sum_{j'=1}^{\mathsf K_{i-1}}\sum_{j=1}^{\mathsf K_i}\sum_{k=1}^{\mathsf M_{i;j',j}} \lambda_{i;j',j,k} \leq 2^{-i-5}, \quad \forall i \in \mathbb N. 
\end{align*}

	With the necessary ingredients now defined, we claim that a generator for $B(A,D)$ is given by $\mathfrak G = \sum_{i\in \mathbb N} \mathfrak G_i$, where 
\begin{align*}
	\mathfrak G_i = \begin{cases}\sum\limits_{j=1}^{\mathsf K_i} \bigg ( g_{i,j} + \sum\limits_{k=2}^{\mathsf M_{i;1,j}-1} \lambda_{i;1,j,k} q_{i;1,j,k} + \sum\limits_{k=2}^{\mathsf M_{i;1,j}} \lambda_{i;1,j,k} w_{i;1,j,k}\bigg ) ,& \mathsf K_{i-1} = 1\\[10pt] 
	\sum\limits_{j=1}^{\mathsf K_i} \bigg ( g_{i,j} + \sum\limits_{k=2}^{\mathsf M_{i;1,j}} \lambda_{i;1,j,k} q_{i;1,j,k} + \sum\limits_{k=2}^{\mathsf M_{i;1,j}} \lambda_{i;1,j,k} w_{i;1,j,k}\\
	 \quad + \sum\limits_{j'=2}^{\mathsf K_{i-1}-1}  \sum\limits_{k=1}^{\mathsf M_{i;j',j}} \big (\lambda_{i;j',j,k} q_{i;j',j,k} + \lambda_{i;j',j,k} w_{i;j',j,k} \big ) \\
	\qquad + \sum\limits_{k=1}^{\mathsf M_{i;\mathsf K_{i-1},j}-1} \lambda_{i;\mathsf K_{i-1},j,k} q_{i;\mathsf K_{i-1},j,k} + \sum\limits_{k=1}^{\mathsf M_{i;\mathsf K_{i-1},j}} \lambda_{i;\mathsf K_{i-1},j,k} w_{i;\mathsf K_{i-1},j,k} \bigg ),& \mathsf K_{i-1} \not = 1 \end{cases}. 
\end{align*}
It is plain to see 
\begin{align}\label{E:inequality}
	\|\mathfrak G_i\| < 2^{-i-2} + 8 \sum_{j'=1}^{\mathsf K_{i-1}}\sum_{j=1}^{\mathsf K_i}\sum_{k=1}^{\mathsf M_{i;j',j}} \lambda_{i;j',j,k} \leq 2^{-i-1}
\end{align}
so that $\mathfrak G$ is well-defined. 

	Before proving that $\mathfrak G$ generates $B(A,D)$, we perform some straightforward calculations which will be needed in the proof. In what follows, we drop the indices on the elements of $\Lambda$ to make our equations more readable. It is immaterial which specific member of $\Lambda$ is attached to which partial isometry; what is important is that for distinct partial isometries, there are distinct members of $\Lambda$ attached to each. Nevertheless, for the sake of rigor, we make the following rule. Whenever an expression of the form $a\lambda u_{i;j',j,k}b$ appears below, for $a,b \in B$ and $u_{i;j',j,k} \in Q_{i;j',j} \cup W_{i;j',j}$ (for any $i \in \mathbb N$, $1 \leq j' \leq \mathsf K_{i-1}$, and $1 \leq j \leq \mathsf K_i$), it is to be interpreted as $a \lambda_{i;j',j,k}u_{i;j',j,k} b$ for $\lambda_{i;j',j,k} \in \Lambda$. 
	
	Define 
\begin{align*}
	S := \bigcup_{i\in \mathbb N} \bigcup_{j'=1}^{\mathsf K_{i-1}} \bigcup_{j=1}^{\mathsf K_i} Q_{i;j',j}, 
\end{align*}
and let $q_{i;j',j,k},q_{r;s',s,t}\in S$. We calculate the product $q_{i;j',j,k}q_{r;s',s,t}$ for $i+1 <r$, $i+1 = r$, and $i= r$. Indeed, for $i+1 <r$,  
\begin{align}\label{E:prod1}
	q_{i;j',j,k}q_{r;s',s,t} = \begin{cases}
	q_{r;s',s,t},& j'=\mathsf K_{i-1},\, j=\mathsf K_i,\, k=\mathsf M_{i;\mathsf K_{i-1},\mathsf K_i}\\
		0,& \text{otherwise}
	\end{cases}, 
\end{align}
and 
\begin{gather}
	q_{i;j',j,k}q_{i+1;s',s,t} = \begin{cases}
		q_{i+1;s',s,t},& j'=\mathsf K_{i-1},\, j=s',\, k=\mathsf M_{i;\mathsf K_{i-1},j}\\
		0,& \text{otherwise}
		\end{cases},\label{E:prod2}\\
		q_{i;j',j,k}q_{i;s',s,t} = \begin{cases}
		q_{i;s',s,t},& j'=s',\, j=s,\, k=t\\
		0,& \text{otherwise}\label{E:prod3}
	\end{cases}. 
\end{gather}
Taking adjoints and relabeling indices yields the products for $i-1=r$ and $i-1>r$. 

	Now consider the subset of $S$ given by $R = S \setminus \{q_{i;\mathsf K_{i-1},j,\mathsf M_{i;\mathsf K_{i-1},j}}\mid i \in \mathbb N,\, 1 \leq j \leq \mathsf K_i\}$, and let $q_{r;s',s,t} \in R$. Then 
\begin{align*}
	q_{r;s',s,t} \mathfrak G_i = \begin{cases}\sum\limits_{j=1}^{\mathsf K_i} \bigg ( q_{r;s',s,t} q_{i;1,j,1} g_{i,j}\\ \quad + \sum\limits_{k=2}^{\mathsf M_{i;1,j}-1} q_{r;s',s,t} \lambda q_{i;1,j,k}     + \sum\limits_{k=2}^{\mathsf M_{i;1,j}} q_{r;s',s,t}q_{i;1,j,1}\lambda w_{i;1,j,k}\bigg ) ,& \mathsf K_{i-1} = 1\\[10pt] 
	\sum\limits_{j=1}^{\mathsf K_i} \bigg ( q_{r;s',s,t} q_{i;1,j,1}g_{i,j}   + \sum\limits_{k=2}^{\mathsf M_{i;1,j}} q_{r;s',s,t}\lambda q_{i;1,j,k}  +  \sum\limits_{k=2}^{\mathsf M_{i;1,j}} q_{r;s',s,t} q_{i;1,j,1}\lambda w_{i;1,j,k}\\
	 \quad + \sum\limits_{j'=2}^{\mathsf K_{i-1}-1}  \sum\limits_{k=1}^{\mathsf M_{i;j',j}} \big (q_{r;s',s,t} \lambda q_{i;j',j,k} + q_{r;s',s,t}q_{i;1,j,1}\lambda w_{i;j',j,k} \big ) \\
	\qquad + \sum\limits_{k=1}^{\mathsf M_{i;\mathsf K_{i-1},j}-1} q_{r;s',s,t}\lambda q_{i;\mathsf K_{i-1},j,k} + \sum\limits_{k=1}^{\mathsf M_{i;\mathsf K_{i-1},j}} q_{r;s',s,t}q_{i;1,j,1} \lambda w_{i;\mathsf K_{i-1},j,k} \bigg ),& \mathsf K_{i-1} \not = 1 \end{cases}
\end{align*}
so that, using Equations \eqref{E:prod1}--\eqref{E:prod3}, we find $q_{r;s',s,t}\mathfrak G_i = 0$ when $i \not = r$ and 
\begin{align}\label{E:qG}
	q_{r;s',s,t}\mathfrak G &= q_{r;s',s,t} \mathfrak G_r \\ &=  \begin{cases}
		g_{r,s} + \sum\limits_{k=2}^{\mathsf M_{r;1,s}} \lambda w_{r;1,s,k} \\ \quad + \sum\limits_{j'=2}^{\mathsf K_{r-1}-1} \sum\limits_{k=1}^{\mathsf M_{r;j',s}} \lambda w_{r;j',s,k} + (1-\delta_{1,\mathsf K_{r-1}}) \sum\limits_{k=1}^{\mathsf M_{r;\mathsf K_{r-1},s}} \lambda w_{r;\mathsf K_{r-1},s,k},& s'=1,\, t=1\\
		\lambda q_{r;s',s,t},& \text{otherwise}
	\end{cases}. \notag
\end{align}
It follows that 
\begin{align}\label{E:qGq}
	q_{r;s',s,t}\mathfrak G q_{r;s',s,t} &=  \begin{cases}
		g_{r,s}q_{i;1,s,1}q_{r;s',s,t} + \sum\limits_{k=2}^{\mathsf M_{r;1,s}} \lambda w_{r;1,s,k}q_{r;1,s,k}q_{r;s',s,t} \\ \quad + \sum\limits_{j'=2}^{\mathsf K_{r-1}-1} \sum\limits_{k=1}^{\mathsf M_{r;j',s}} \lambda w_{r;j',s,k} q_{r;j',s,k}q_{r;s',s,t}\\ \qquad+  (1-\delta_{1,\mathsf K_{r-1}}) \sum\limits_{k=1}^{\mathsf M_{r;\mathsf K_{r-1},s}} \lambda w_{r;\mathsf K_{r-1},s,k}q_{r;\mathsf K_{r-1},s,k}q_{r;s',s,t},& s'=1,\, t=1\\
		\lambda q_{r;s',s,t}q_{r;s',s,t},& \text{otherwise}
	\end{cases}
	\\ &= \begin{cases}
		g_{r,s},& s'=1,\, t=1\\
		\lambda q_{r;s',s,t},& \text{otherwise}
	\end{cases}. \notag
\end{align}
Moreover, 
\begin{align*}
	\mathfrak G_i q_{r;s',s,t}= \begin{cases}\sum\limits_{j=1}^{\mathsf K_i} \bigg ( g_{i,j}q_{i;1,j,1}q_{r;s',s,t} \\ \quad + \sum\limits_{k=2}^{\mathsf M_{i;1,j}-1} \lambda q_{i;1,j,k}q_{r;s',s,t} + \sum\limits_{k=2}^{\mathsf M_{i;1,j}} \lambda w_{i;1,j,k}q_{i;1,j,k}q_{r;s',s,t}\bigg ) ,& \mathsf K_{i-1} = 1\\[10pt] 
	\sum\limits_{j=1}^{\mathsf K_i} \bigg ( g_{i,j}q_{i;1,j,1}q_{r;s',s,t} + \sum\limits_{k=2}^{\mathsf M_{i;1,j}} \lambda q_{i;1,j,k}q_{r;s',s,t} + \sum\limits_{k=2}^{\mathsf M_{i;1,j}} \lambda w_{i;1,j,k}q_{i;1,j,k} q_{r;s',s,t}\\
	 \quad + \sum\limits_{j'=2}^{\mathsf K_{i-1}-1}  \sum\limits_{k=1}^{\mathsf M_{i;j',j}} \big (\lambda q_{i;j',j,k} q_{r;s',s,t} + \lambda w_{i;j',j,k} q_{i;j',j,k}q_{r;s',s,t}\big ) \\
	\qquad + \sum\limits_{k=1}^{\mathsf M_{i;\mathsf K_{i-1},j}-1} \lambda q_{i;\mathsf K_{i-1},j,k} q_{r;s',s,t} + \sum\limits_{k=1}^{\mathsf M_{i;\mathsf K_{i-1},j}} \lambda w_{i;\mathsf K_{i-1},j,k} q_{i;\mathsf K_{i-1},j,k} q_{r;s',s,t}\bigg ),& \mathsf K_{i-1} \not = 1 \end{cases}, 
\end{align*}
so that, using Equations \eqref{E:prod1}--\eqref{E:prod3} again, we find $\mathfrak G_i q_{r;s',s,t} = 0$ when $i >r$, $\mathfrak G_i q_{r;s',s,t} = \lambda w_{i;\mathsf K_{i-1},\mathsf K_i,\mathsf M_{i;\mathsf K_{i-1},\mathsf K_i}} q_{r;s',s,t}$ when $i+1 <r$, and  
\begin{gather*}
	\mathfrak G_{r-1} q_{r;s',s,t} = \lambda w_{r-1;\mathsf K_{r-2},s',\mathsf M_{r-1;\mathsf K_{r-2},s'}} q_{r;s',s,t},\\
	\mathfrak G_{r} q_{r;s',s,t} = \begin{cases}
		g_{r,s},& s'=1,\, t=1\\
		\lambda q_{r;s',s,t} + \lambda w_{r;s',s,t},& \text{otherwise}
	\end{cases}. 
\end{gather*}
Hence, 
\begin{align}\label{E:Gq}
	\mathfrak Gq_{r;s',s,t} &= \sum_{i=1}^r \mathfrak G_i q_{r;s',s,t}\\
	&= \sum_{i=1}^{r-2} \lambda w_{i;\mathsf K_{i-1},\mathsf K_i,\mathsf M_{i;\mathsf K_{i-1},\mathsf K_i}} q_{r;s',s,t} + \lambda w_{r-1;\mathsf K_{r-2},s',\mathsf M_{r-1;\mathsf K_{r-2},s'}} q_{r;s',s,t} + \mathfrak G_{r} q_{r;s',s,t}\notag\\
	&= \begin{cases}
	\sum\limits_{i=1}^{r-2} \lambda w_{i;\mathsf K_{i-1},\mathsf K_i,\mathsf M_{i;\mathsf K_{i-1},\mathsf K_i}} q_{r;1,s,1} \\ \quad + (1-\delta_{r,1})\lambda w_{r-1;\mathsf K_{r-2},1,\mathsf M_{r-1;\mathsf K_{r-2},1}} q_{r;1,s,1} + g_{r,s},& s'=1, \, t=1\\
	\sum\limits_{i=1}^{r-2} \lambda w_{i;\mathsf K_{i-1},\mathsf K_i,\mathsf M_{i;\mathsf K_{i-1},\mathsf K_i}} q_{r;s',s,t} \\ \quad + (1-\delta_{r,1})\lambda w_{r-1;\mathsf K_{r-2},s',\mathsf M_{r-1;\mathsf K_{r-2},s'}} q_{r;s',s,t} \\ \qquad + \lambda q_{r;s',s,t} + \lambda w_{r;s',s,t},& \text{otherwise}
	\end{cases}.\notag
\end{align}

\begin{thm}\label{T:main}
	A C*-algebra with an AF-action is singly generated. 
\end{thm}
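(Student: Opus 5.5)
We show that the element $\mathfrak G=\sum_{i}\mathfrak G_i$ constructed above generates $B=B(A,D)$. Since $B=\C(A,D)$, and $D$ is generated by the $d_i$, it suffices to show three things: $\C(\mathfrak G)$ contains every projection in $S$; it contains every partial isometry in each $W_{i;j',j}$; and it contains every $g_{i,j}$. Indeed, Lemma \ref{L:partialIso} and the remark after Lemma \ref{L:preMain2} then give $\bigoplus_j\M_{\mathsf N_{i,j}}\subseteq\C(\mathfrak G)$ for all $i$ by induction, so $A\subseteq\C(\mathfrak G)$. Condition \eqref{Con:G3} then yields $d_i\in\C(\mathfrak G)$ for all $i$.

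\emph{Step 1: the projections.} Order the projections of $R$ (together with the remaining ``last'' projections $q_{i;\mathsf K_{i-1},j,\mathsf M_{i;\mathsf K_{i-1},j}}$, which are determined as differences via (Q2)) into a single sequence $(p_n)$ of mutually orthogonal projections. Order them level by level in $i$ and, within a level, so that the projection $q_{i;1,j,1}$ carrying $g_{i,j}$ is handled correctly. Then apply Lemma \ref{L:upT} to $a=\mathfrak G$. I would verify its hypotheses from the computations \eqref{E:qG}--\eqref{E:Gq}:
\begin{itemize}
\item Condition \ref{Con:upTspec2} and Condition \ref{Con:upTspec} follow from \eqref{E:qGq}. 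The compressions are $g_{r,s}$ or $\lambda q$ with $\lambda\in\Lambda$, all invertible in the corner with pairwise disjoint spectra by \eqref{Con:G2}, \eqref{Con:G4} and the choice of $\Lambda$.
\item Condition \ref{Con:trace} follows from the norm bound \eqref{E:inequality}. The tail $(1-P_n)\mathfrak G(1-P_n)$ only sees $\mathfrak G_i$ for large $i$, plus corners of small norm.
\item Condition \ref{Con:upT} is the upper-triangularity $(1-P_n)\mathfrak G P_n=0$. This is read off from \eqref{E:Gq}: $\mathfrak G q$ only involves partial isometries whose ranges are earlier projections, namely the $w$'s with range $q_{i;1,j,1}$, or earlier-level last projections, plus $q$ itself.
\end{itemize}

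The main obstacle is Step 1, specifically getting an ordering for which upper-triangularity holds exactly. The difficulty is that the $w$'s from level $i$ have range $q_{i;1,j,1}$, while the tails $\sum_i\lambda w_{i;\dots}q_r$ in \eqref{E:Gq} land in the last projections of earlier levels, which are not elements of $R$. My first attempt would be to work only with $R$ and, at each level, treat the compression to the last projection as a not-yet-split tail. Concretely, I would apply Lemma \ref{L:upT} inside the corners $p_{i-1,j'}Bp_{i-1,j'}$ inductively in $i$, using (Q2) to recover $p_{i,j}$ as a difference once the other level-$i$ projections are in $\C(\mathfrak G)$.

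\emph{Step 2: the $g$'s and $w$'s.} Once the projections of $S$ lie in $\C(\mathfrak G)$, \eqref{E:qGq} gives $g_{r,s}=q_{r;1,s,1}\mathfrak G q_{r;1,s,1}\in\C(\mathfrak G)$. Next, by \eqref{E:qG} and the fact that $w_{r;s',s,t}$ has range $q_{r;1,s,1}$ and source $q_{r;s',s,t}$, we get
\[
q_{r;1,s,1}\,\mathfrak G\, q_{r;s',s,t}=\lambda_{r;s',s,t}\,w_{r;s',s,t}\qquad\text{for }(s',t)\neq(1,1).
\]
Since $\lambda_{r;s',s,t}\neq0$, each $w_{r;s',s,t}$ lies in $\C(\mathfrak G)$. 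This case needs a check for the last projections, where I would use the same compression with the last projection obtained as a difference.
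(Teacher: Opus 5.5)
Your overall architecture is the paper's: Lemma \ref{L:upT} puts projections into $\C(\mathfrak G)$, compressions of $\mathfrak G$ yield the $g_{r,s}$ and the $w$'s, and then Lemma \ref{L:partialIso} and \eqref{Con:G3} finish. But Step 1, which you correctly flag as the crux, is never actually carried out, and the plan you give for it does not work as stated.

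\emph{The proposed sequence is not orthogonal.} The last projections cannot be adjoined to $R$ in a mutually orthogonal sequence. By (Q2), $p_{i,j'}=q_{i;\mathsf K_{i-1},j',\mathsf M_{i;\mathsf K_{i-1},j'}}$ equals the sum of the level-$(i+1)$ projections in $\bigcup_j Q_{i+1;j',j}$, so it dominates later terms rather than being orthogonal to them. Moreover, $p_{i,\mathsf K_i}\mathfrak G p_{i,\mathsf K_i}$ carries all deeper levels of $\mathfrak G$. Its spectrum contains $0$, since the deeper $\lambda$'s accumulate there, and this violates Condition \ref{Con:upTspec2}.

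\emph{The obstacle comes from a misreading of \eqref{E:Gq}.} The tail terms $\lambda w_{i;\mathsf K_{i-1},\mathsf K_i,\mathsf M_{i;\mathsf K_{i-1},\mathsf K_i}}q_{r;s',s,t}$ have their \emph{source} under the last projection $p_{i,\mathsf K_i}$. By (W2), however, their \emph{range} is under $q_{i;1,\mathsf K_i,1}$. That projection lies in $R$ (because $\mathsf M_{i;1,j}>1$) and precedes $q_{r;s',s,t}$.

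\emph{The fallback is only a sketch.} The corner-by-corner induction would need the spectrum of the tail compression $p_{i,\mathsf K_i}\mathfrak G p_{i,\mathsf K_i}$, which is essentially the infinite Lemma \ref{L:upT} again. Also, recovering $p_{i,j}$ ``as a difference'' of level-$i$ projections only gives $\sum_j p_{i,j}$ when $\mathsf K_i>1$.

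\emph{The fix: no tail needs splitting.} Take $(p_n)$ to be $R$ alone, in lexicographic order.
\begin{itemize}
\item For $q\in R$, every summand of $\mathfrak G q$ in \eqref{E:Gq} has range under $q$ itself or under some $q_{i;1,j,1}\in R$ preceding $q$. Hence $(1-P_n)\mathfrak G P_n=0$ holds exactly.
\item Consequently $(1-P_n)\mathfrak G(1-P_n)=\mathfrak G-P_n\mathfrak G$. Let $r$ be the level of $p_{n+1}$. Then $P_n\mathfrak G_i=\mathfrak G_i$ for $i<r$ and $P_n\mathfrak G_i=0$ for $i>r$, so Condition \ref{Con:trace} follows from \eqref{E:inequality}.
\end{itemize}

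This gives $R\subseteq\C(\mathfrak G)$, but not the last projections, so Step 2 must be adjusted for the last partial isometry. As the paper does, obtain $w_{r;\mathsf K_{r-1},s,\mathsf M_{r;\mathsf K_{r-1},s}}$ by subtracting $g_{r,s}$ and the already-obtained $\lambda w$'s from $q_{r;1,s,1}\mathfrak G$, using \eqref{E:qG}. Alternatively, the last projections can be recovered from $R$ by looking one level further: for $j<\mathsf K_i$, $p_{i,j}$ is a finite sum of level-$(i+1)$ elements of $R$, and then $p_{i,\mathsf K_i}$ is a difference.
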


\begin{proof}
	We will prove that $\C(\mathfrak G) = B(A,D)$. For this, our goal is to show that $R \subseteq \C(\mathfrak G)$. Once this is done, we will be able to use the elements of $R$ to extract the finite-dimensional algebras $\bigoplus_{1\leq j\leq \mathsf K_i} \M_{\mathsf N_{i,j}}$ (and hence the AF algebra $A$) from $\C(\mathfrak G)$ along with the self-adjoint generators $d_1,d_2,\dots$ of $D$. Since $B = \C(A,D)$, the result will then follow. 

	Let $\preceq$ denote the lexicographic order on $R$; to be precise, $q_{i;j',j,k} \preceq q_{r;s',s,t}$ if $i < r$ or if $i=r$ and $j' < s'$ or if $i=r$, $j'=s'$, and $j < s$ or if $i=r$, $j'=s'$, $j=s$, and $k \leq t$. Let $p_1 =q_{1;1,1,1}$, and for every $i \in \mathbb N$, define $p_{i+1}\in R$ such that $p_{i+1} \preceq q$ for every $q \in R\setminus \{p_1,\dotsc,p_i\}$ (roughly speaking, $p_{i+1}$ is the smallest element in $R$ greater than $p_i$). To show that $R \subseteq \C(\mathfrak G)$, it is sufficient to show that $\mathfrak G$ and the sequence $(p_i)_{i\in \mathbb N}$ of nonzero mutually orthogonal projections satisfy the hypotheses of Lemma \ref{L:upT}. That $\mathfrak G$ and $(p_i)_{i\in \mathbb N}$ satisfy Conditions \ref{Con:upTspec} and \ref{Con:upTspec2} of Lemma \ref{L:upT} is clear from the spectral properties of the members of $\bigcup_{r\in \mathbb N} G_r$ (in particular, from Conditions \ref{Con:G2} and \ref{Con:G4}), the definition of $\Lambda$, and Equation \eqref{E:qGq}. 
	
	We now show that Condition \ref{Con:upT} of Lemma \ref{L:upT} holds; defining $P_n := \sum_{1\leq i \leq n} p_i$, we wish to show $(1-P_n) \mathfrak G P_n = 0$ for every $n \in \mathbb N$. Appealing to Equations \eqref{E:qGq} and  \eqref{E:Gq}, we have $(1-P_1)\mathfrak G P_1 = \mathfrak Gq_{1;1,1,1} - q_{1;1,1,1}\mathfrak G q_{1;1,1,1} = g_{1,1} - g_{1,1} = 0$ so that the desired equality is true for the case $n = 1$. Fix $n \in \mathbb N$, and suppose $(1-P_n)\mathfrak G P_n = 0$. Notice 
\begin{align*}
	(1-P_{n+1}) \mathfrak G P_{n+1} &= \big (1-(P_n + p_{n+1}) \big ) \mathfrak G(P_n + p_{n+1})\\ &= (1-P_n)\mathfrak G P_n + \mathfrak G p_{n+1} - P_n \mathfrak G p_{n+1} - p_{n+1} \mathfrak G P_n - p_{n+1} \mathfrak G p_{n+1}; 
\end{align*}	
thus, to ensure $(1-P_{n+1}) \mathfrak G P_{n+1} = 0$, we need 
\begin{align}\label{E:pGp}
	\mathfrak G p_{n+1} - P_n \mathfrak G p_{n+1} - p_{n+1} \mathfrak G P_n - p_{n+1} \mathfrak G p_{n+1} = 0. 
\end{align}

	To that end, assume $p_{n+1} = q_{r;s',s,t}$, and notice from the definition of $P_n$ that 
\begin{align}\label{E:Pp}
	P_n p_i = p_i P_n = \begin{cases}
		p_i,& i \leq n\\
		0,& i > n
	\end{cases}. 
\end{align}
Hence, appealing to Equation \eqref{E:Gq}, 
\begin{align}\label{E:PGq}
	P_n\mathfrak Gp_{n+1} &= \begin{cases}
	\sum\limits_{i=1}^{r-2} P_n q_{i;1,\mathsf K_i,1}\lambda w_{i;\mathsf K_{i-1},\mathsf K_i,\mathsf M_{i;\mathsf K_{i-1},\mathsf K_i}} q_{r;1,s,1} \\ \quad + (1-\delta_{r,1})P_n q_{r-1;1,1,1}\lambda w_{r-1;\mathsf K_{r-2},1,\mathsf M_{r-1;\mathsf K_{r-2},1}} q_{r;1,s,1}\\ \qquad + P_n q_{r;1,s,1}g_{r,s},& s'=1, \, t=1\\
	\sum\limits_{i=1}^{r-2} P_n q_{i;1,\mathsf K_i,1} \lambda w_{i;\mathsf K_{i-1},\mathsf K_i,\mathsf M_{i;\mathsf K_{i-1},\mathsf K_i}} q_{r;s',s,t} \\ \quad + (1-\delta_{r,1})P_n q_{r-1;1,s',1}\lambda w_{r-1;\mathsf K_{r-2},s',\mathsf M_{r-1;\mathsf K_{r-2},s'}} q_{r;s',s,t} \\ \qquad + P_n\lambda q_{r;s',s,t} + P_n q_{r;1,s,1}\lambda w_{r;s',s,t},& \text{otherwise}
	\end{cases} \\
	&= \begin{cases}
		\sum\limits_{i=1}^{r-2} \lambda w_{i;\mathsf K_{i-1},\mathsf K_i,\mathsf M_{i;\mathsf K_{i-1},\mathsf K_i}} q_{r;1,s,1} \\ \quad + (1-\delta_{r,1})\lambda w_{r-1;\mathsf K_{r-2},1,\mathsf M_{r-1;\mathsf K_{r-2},1}} q_{r;1,s,1},& s'=1,\, t=1\\
		\sum\limits_{i=1}^{r-2} \lambda w_{i;\mathsf K_{i-1},\mathsf K_i,\mathsf M_{i;\mathsf K_{i-1},\mathsf K_i}} q_{r;s',s,t} \\ \quad + (1-\delta_{r,1})\lambda w_{r-1;\mathsf K_{r-2},s',\mathsf M_{r-1;\mathsf K_{r-2},s'}} q_{r;s',s,t} + \lambda w_{r;s',s,t},& \text{otherwise}
	\end{cases}, \notag
\end{align}
and appealing to Equation \eqref{E:qG},
\begin{align}\label{E:qGP}
	p_{n+1}\mathfrak G P_n &=  \begin{cases}
		g_{r,s}q_{r;1,s,1}P_n + \sum\limits_{k=2}^{\mathsf M_{r;1,s}} \lambda w_{r;1,s,k}q_{r;1,s,k}P_n \\ \quad + \sum\limits_{j'=2}^{\mathsf K_{r-1}-1} \sum\limits_{k=1}^{\mathsf M_{r;j',s}} \lambda w_{r;j',s,k} q_{r;j',s,k}P_n \\ \qquad + (1-\delta_{1,\mathsf K_{r-1}}) \sum\limits_{k=1}^{\mathsf M_{r;\mathsf K_{r-1},s}} \lambda w_{r;\mathsf K_{r-1},s,k}q_{r;\mathsf K_{r-1},s,k}P_n,& s'=1,\, t=1\\
		\lambda q_{r;s',s,t}P_n,& \text{otherwise}
	\end{cases} \\ &= 0. \notag
\end{align}
Thus, we see from Equations \eqref{E:Gq}, \eqref{E:PGq}, \eqref{E:qGP}, and \eqref{E:qGq} that in fact Equation \eqref{E:pGp} holds; that is, $\mathfrak G$ and $(p_i)_{i\in \mathbb N}$ satisfy Condition \ref{Con:upT} of Lemma \ref{L:upT}. 

	Finally, to see Condition \ref{Con:trace} of Lemma \ref{L:upT} holds for $\mathfrak G$ and $(p_i)_{i\in \mathbb N}$, we wish to show 
\begin{align*}
	\lim_{n\to\infty} \| (1-P_n)\mathfrak G (1-P_n)\| = \lim_{n\to\infty}\| (1-P_n)\mathfrak G - (1-P_n)\mathfrak G P_n\| = \lim_{n\to\infty}\|\mathfrak G - P_n\mathfrak G\| = 0, 
\end{align*}
where the second equality follows from what we just proved in the previous paragraph. Fix $n \in \mathbb N$, and assume $p_{n+1} = q_{r;s',s,t}$ again. Notice 
\begin{align*}
	P_n \mathfrak G_i = \begin{cases}\sum\limits_{j=1}^{\mathsf K_i} \bigg ( P_n q_{i;1,j,1}g_{i,j} + \sum\limits_{k=2}^{\mathsf M_{i;1,j}-1} P_n \lambda q_{i;1,j,k} + \sum\limits_{k=2}^{\mathsf M_{i;1,j}} P_n  q_{i;1,j,1} \lambda w_{i;1,j,k}\bigg ) ,& \mathsf K_{i-1} = 1\\[10pt] 
	\sum\limits_{j=1}^{\mathsf K_i} \bigg ( P_n q_{i;1,j,1}g_{i,j} + \sum\limits_{k=2}^{\mathsf M_{i;1,j}} P_n\lambda q_{i;1,j,k} + \sum\limits_{k=2}^{\mathsf M_{i;1,j}} P_n q_{i;1,j,1}\lambda w_{i;1,j,k}\\
	 \quad + \sum\limits_{j'=2}^{\mathsf K_{i-1}-1}  \sum\limits_{k=1}^{\mathsf M_{i;j',j}} \big (P_n \lambda q_{i;j',j,k} + P_n q_{i;1,j,1} \lambda w_{i;j',j,k} \big ) \\
	\qquad + \sum\limits_{k=1}^{\mathsf M_{i;\mathsf K_{i-1},j}-1} P_n \lambda q_{i;\mathsf K_{i-1},j,k} + \sum\limits_{k=1}^{\mathsf M_{i;\mathsf K_{i-1},j}} P_n q_{i;1,j,1} \lambda w_{i;\mathsf K_{i-1},j,k} \bigg ),& \mathsf K_{i-1} \not = 1 \end{cases}. 
\end{align*}
That is, $P_n \mathfrak G_i$ is a sum of terms of the form $P_n qb$ for $q \in ( \bigcup_{1 \leq j' \leq \mathsf K_{i-1}} \bigcup_{1 \leq j \leq \mathsf K_i} Q_{i;j',j}) \cap R$ and $b \in B$; in particular, by Equation \eqref{E:Pp}, $P_n q b=0$ for $p_{n+1} \preceq q$ and $P_n q b = qb$ otherwise. It follows that 
\begin{align*}
	P_n \mathfrak G_i = \begin{cases}
		\mathfrak G_i,& 1 \leq i <r\\
		0,& i >r
	\end{cases}, 
\end{align*}
and subsequently, that 
\begin{align*}
	\|\mathfrak G - P_n \mathfrak G\| = \Big \|\sum_{i=r+1}^{\infty} \mathfrak G_{i} + \mathfrak G_r - P_n \mathfrak G_r \Big \|<  \sum_{i=r+1}^\infty 2^{-i-1} + \|\mathfrak G_r - P_n \mathfrak G_r\| < \sum_{i=r+1}^\infty 2^{-i-1} + 2^{-r-1}.
\end{align*}
Noticing that as $n$ goes to infinity so does $r$, Condition \ref{Con:trace} of Lemma \ref{L:upT} follows. We conclude that $R \subseteq \C(\mathfrak G)$. 

Now, notice $W_{r;s',s} \subseteq \C(\mathfrak G)$ for every $r \in \mathbb N$, $1 \leq s' \leq \mathsf K_{r-1}$ and $1 \leq s \leq \mathsf K_r$. Indeed, for any $r \in \mathbb N$ and $1 \leq s \leq \mathsf K_r$, $w_{r;1,s,1} \in \C(\mathfrak G)$ since $w_{r;1,s,1} = q_{r;1,s,1} \in R$; moreover, from Equation \eqref{E:qG} 
\begin{gather*}
	  \frac{1}{\lambda_{r;1,s,k}}(q_{r;1,s,1}\mathfrak G - g_{r,s})q_{r;1,s,k} = w_{r;1,s,k} \in \C(\mathfrak G),\quad 2\leq k \leq \mathsf M_{r;1,s},\\
	  \frac{1}{\lambda_{r;j',s,k}}(q_{r;1,s,1}\mathfrak G - g_{r,s})q_{r;j',s,k} = w_{r;j',s,k} \in \C(\mathfrak G),\quad 1 < j' < \mathsf K_{r-1},\, 1 \leq k \leq \mathsf M_{r;j',s},\\
	  \frac{1}{\lambda_{r;\mathsf K_{r-1},s,k}}(q_{r;1,s,1}\mathfrak G - g_{r,s})q_{r;\mathsf K_{r-1},s,k} = w_{r;\mathsf K_{r-1},s,k} \in \C(\mathfrak G),\quad 1 \leq k < \mathsf M_{r;\mathsf K_{r-1},s}; 
\end{gather*}
also, 
\begin{multline*}
	 \frac{1}{\lambda_{r;\mathsf K_{r-1},s,\mathsf M_{r;\mathsf K_{r-1},s}}}\big (q_{r;1,s,1}\mathfrak G-g_{r,s}  - \sum\limits_{k=2}^{\mathsf M_{r;1,s}} \lambda w_{r;1,s,k} - \sum\limits_{j'=2}^{\mathsf K_{r-1}-1} \sum\limits_{k=1}^{\mathsf M_{r;j',s}} \lambda w_{r;j',s,k}\\ - (1-\delta_{1,\mathsf K_{r-1}}) \sum\limits_{k=1}^{\mathsf M_{r;\mathsf K_{r-1},s}-1} \lambda w_{r;\mathsf K_{r-1},s,k}\big ) = w_{r;\mathsf K_{r-1},s,\mathsf M_{r;\mathsf K_{r-1},s}} \in \C(\mathfrak G). 
\end{multline*}

Since $U_{1,j} = W_{1;1,j}$ for each $1\leq j \leq \mathsf K_1$, we see $\bigoplus_{1\leq j\leq \mathsf K_1} \M_{\mathsf N_{1,j}} \subseteq \C(\mathfrak G)$ (see the discussion following Lemma \ref{L:preMain2}); hence, by Lemma \ref{L:partialIso}, $\bigoplus_{1\leq j\leq \mathsf K_i} \M_{\mathsf N_{i,j}} \subseteq \C(\mathfrak G)$ for each $i \in \mathbb N$, and we see $A \subseteq \C(\mathfrak G)$. Furthermore, it is  clear from Equation \eqref{E:qGq} that $G_i \subseteq \C(\mathfrak G)$ for each $i \in \mathbb N$; but $d_i \in \C(\bigoplus_{1\leq j\leq \mathsf K_i} \M_{\mathsf N_{i,j}}, G_i)$ by Condition \ref{Con:G3} so that $\{d_1,d_2,\dots\}$ and hence $D$ is contained in $\C(\mathfrak G)$. 
\end{proof}

	The following corollaries now follow from the discussion at the end of Section \ref{S:prelim}. 

\begin{cor}\label{C:main}
	A simple AH algebra with diagonal maps is singly generated. 
\end{cor}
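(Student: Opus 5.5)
Corollary \ref{C:main} follows by combining the first lemma of Section \ref{S:prelim} with Theorem \ref{T:main}; nothing new needs to be constructed. Let $B = \lim (B_i,\phi_i)$ be a simple unital AH algebra with diagonal maps as in Definition \ref{D:AH}. By that lemma, $B$ has an AF-action $B = B(A,D)$. Here $A$ is the AF subalgebra $\lim(A_i,\psi_i)$ with $A_i = \bigoplus_j \M_{n_{i,j}}$. The algebra $D$ is generated by the elements $p\otimes f$ with $p \in E_{i,j}$ and $f \in \text C(X_{i,j})$. Theorem \ref{T:main} then says that $B$ is singly generated.

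To make sure the hypotheses of Definition \ref{D:AF} genuinely hold, I would check the following points.

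\begin{enumerate}
\item \emph{$A$ is simple.} This follows from the simplicity characterization \cite[Proposition 2.3]{EHT}, as in the paragraph after Definition \ref{D:AH}. That characterization assumes injective connecting maps. If the $\phi_i$ are not injective, I would first pass to the quotients $B_i/\ker(\phi_{i,\infty})$. In the diagonal setting each such quotient is again a direct sum of matrix algebras over closed subsets of the $X_{i,j}$. The harmless subtlety is that these closed subsets may be disconnected; one can either note that connectedness is never used in the AF-action argument, or split into components.
\item \emph{$A$ and $D$ are separable and unital, and the decomposition is canonical.} Separability holds since each $X_{i,j}$ is compact metric. The connecting maps send canonical matrix units to sums of canonical matrix units, because the seeds are diagonal, so the inductive limit decomposition of $A$ is canonical.
\item \emph{$D$ commutes with $D_0$.} Since the maps are diagonal, $\phi_{i,i'}(p\otimes f)$ is again a sum of diagonal matrix units tensored with functions. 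Hence $D$ is commutative and contains $D_0$.
\item \emph{$V$ preserves $D$ under conjugation.} For $v \in V$ and $d \in D$, conjugating a diagonal element by a matrix unit at a common finite stage gives a diagonal element, so $vdv^* \in D$.
\end{enumerate}

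The only step needing any care is the first: justifying simplicity of $A$ when the connecting maps might fail to be injective. If the intended setting (as for Villadsen algebras, by the discussion after Definition \ref{D:Vill}) already has injective maps, the corollary is immediate from the two cited results.
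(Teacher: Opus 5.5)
Your proposal is correct and follows the paper exactly: the corollary is just Theorem \ref{T:main} applied to the AF-action produced by the first lemma of Section \ref{S:prelim}. Your treatment of non-injective connecting maps is a reasonable refinement of a point the paper leaves implicit, since its argument that $A$ is simple assumes injective $\phi_i$. Of your two options there, the sound one is that connectedness is never used; splitting the closed sets into components could yield infinitely many summands.
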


\begin{cor}\label{C:main2}
	A Villadsen algebra is singly generated. 
\end{cor}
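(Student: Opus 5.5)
Corollary \ref{C:main2} follows by chaining two results already proved, so the plan is short. The approach is to check that a Villadsen algebra $B$ meets the hypotheses of Corollary \ref{C:main}, that is, that $B$ is a simple unital AH algebra with diagonal maps. Equivalently, one can bypass the intermediate corollary: show directly that $B$ has an AF-action, then invoke Theorem \ref{T:main}.

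First, I verify that $B=\lim(B_i,\phi_i)$ from Definition \ref{D:Vill} fits Definition \ref{D:AH}.
\begin{enumerate}
\item Each $B_i=\M_{n_{i-1}}(\text C(X_i))$ has a single summand ($K_i=1$).
\item $X_i = X^{c_1\cdots c_{i-1}}$ is a compact connected metric space, being a finite product of copies of the compact connected metric space $X$.
\item Each $\phi_i$ is unital.
\end{enumerate}
The key point is that every composite $\phi_{i,i'}$ has a diagonal seed. Equation \eqref{E:composition} displays the seed of $\phi_{i,i'}$, up to permutation, as $f\mapsto\mathrm{diag}\{f\circ\lambda_1,\dots,f\circ\lambda_m\}$. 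Here each $\lambda$ is either a coordinate projection $\pi_s\colon X_{i'}\to X_i$ or the constant map with value some point $\pi_s(y)$, $y\in E_{i''}$. Constant maps are continuous, so these are all continuous maps $X_{i'}\to X_i$. A permutation of diagonal entries is implemented by conjugation by a scalar permutation unitary, which can be absorbed by relabeling the matrix units. So $B$ is an AH algebra with diagonal maps, and the $\phi_i$ are injective since the coordinate projections are surjective.

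Next, simplicity. This is exactly the argument in the paragraph after Definition \ref{D:AH}. It uses the density condition \ref{Con:Vill2} together with \eqref{E:composition} to show that a nonzero $f\in B_{i+1}$ has $\phi_{i+1,i'}(f)$ nowhere vanishing for all large $i'$. The criterion of \cite[Proposition 2.3]{EHT} then gives simplicity of $B$.

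Finally, with $B$ simple, unital, AH with diagonal maps, the first lemma of Section \ref{S:prelim} gives $B$ an AF-action. Theorem \ref{T:main} then yields that $B$ is singly generated; equivalently, Corollary \ref{C:main} applies directly. The only step needing genuine care is the conclusion that the seeds are diagonal "up to permutation". Definition \ref{D:AH} asks for the seed to be exactly diagonal, so I would note that a unitarily equivalent inductive system, obtained by conjugating with permutation unitaries, has the same limit and exactly diagonal seeds. Condition \ref{Con:Vill3} is not needed for single generation.
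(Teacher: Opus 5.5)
Your proposal is correct and follows the paper's own argument: the discussion after Definition \ref{D:AH} shows that a Villadsen algebra is a simple AH algebra with diagonal, injective connecting maps, using Equation \eqref{E:composition} and Condition \ref{Con:Vill2}; the first lemma of Section \ref{S:prelim} then gives an AF-action, and Theorem \ref{T:main} (equivalently Corollary \ref{C:main}) gives single generation. Your extra step of conjugating by permutation unitaries is harmless but unnecessary: composing diagonal seeds already gives a diagonal seed, ``up to permutation'' in Equation \eqref{E:composition} refers only to the order in which the entries are displayed, and Definition \ref{D:AH} does not prescribe the order of $\lambda_1,\dots,\lambda_m$.
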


\begin{cor}
	Let $B=B(A,D)$ have an AF-action, and let $C$ be a separable unital C*-algebra. Then $B\otimes C$ is singly generated. In particular, if $B$ is a Villadsen algebra, then $B\otimes C$ is singly generated. 
\end{cor}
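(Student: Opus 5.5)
This follows directly from two earlier results: the second lemma of Section \ref{S:prelim} (tensoring an algebra with an AF-action by a separable unital C*-algebra preserves the AF-action) and Theorem \ref{T:main}. The plan has three steps.

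First, since $B=B(A,D)$ has an AF-action and $C$ is separable and unital, that lemma shows $B\otimes C$ has an AF-action. Concretely, the simple AF subalgebra is $\mathcal A=\overline{\bigcup_i \mathcal A_i}$ with $\mathcal A_i=\bigoplus_j \M_{n_{i,j}}\otimes\mathbb C1_C$, and the commuting subalgebra is $\mathcal D=D\otimes C$. Before applying the theorem, I would check the standing requirements of Definition \ref{D:AF}:
\begin{enumerate}
\item $\mathcal D$ is separable and unital, since $D$ and $C$ are.
\item $\mathcal A\cong A$ is simple, separable, unital and AF, because $a\mapsto a\otimes 1_C$ is injective.
\end{enumerate}
The three conditions of the definition are exactly what the lemma's proof verifies. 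The generation condition holds because elementary tensors $(a d)\otimes c$ with $a\in A$, $d\in D$ are products of $a\otimes 1$ and $d\otimes c$. Which tensor norm is used is immaterial, as long as $B\otimes C$ means a completion containing the algebraic tensor product densely.

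Second, Theorem \ref{T:main} applied to $B\otimes C=B(\mathcal A,\mathcal D)$ gives a single generator, namely the element $\mathfrak G$ built from $\mathcal A$ and self-adjoint generators of $\mathcal D$. Such generators exist because $\mathcal D$ is separable: take the real and imaginary parts of a countable dense set.

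Third, for the ``in particular'' clause, I would invoke the first lemma of Section \ref{S:prelim}: a Villadsen algebra is simple and AH with diagonal maps, as shown in the discussion after Definition \ref{D:AH}, and hence has an AF-action. The first two steps then apply.

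There is no serious obstacle; the only care needed is bookkeeping. The theorem needs a \emph{canonical} inductive limit decomposition of $\mathcal A$, with matrix units $\mathcal V_{i,j}=\{v\otimes 1_C\}$, and it needs the multiplicities $m_{i_0,i;j',j}$ to grow, which is used in Lemma \ref{L:preMain}. Both transfer verbatim from $A$, because the connecting maps of $(\mathcal A_i)$ are those of $(A_i)$ tensored with the identity on $\mathbb C1_C$.
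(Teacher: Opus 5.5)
Your proposal is correct and follows the paper's own argument. The second lemma of Section \ref{S:prelim} gives $B\otimes C$ an AF-action, Theorem \ref{T:main} then makes it singly generated, and the first lemma of that section, together with the simplicity discussion after Definition \ref{D:AH}, covers the Villadsen case. One small imprecision: $B$ is generated by products of elements of $A$ and $D$, not spanned by products $ad$; a general word still factors the same way, as $x_1\cdots x_k\otimes c=(x_1\otimes 1_C)\cdots(x_{k-1}\otimes 1_C)(x_k\otimes c)$ with $x_l\in A\cup D$.
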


\end{document}